\renewcommand{\@seccntformat}[1]{\bf\csname the#1\endcsname.}
\renewcommand{\section}{\@startsection{section}{1}
	\z@{.7\linespacing\@plus\linespacing}{.5\linespacing}
	{\normalfont\upshape\bfseries\centering}}
\renewcommand{\@biblabel}[1]{\@ifnotempty{#1}{#1.}}
\theoremstyle{plain}
\newtheorem{thm}{Theorem}[section]
\newtheorem{cor}[thm]{Corollary}
\theoremstyle{definition}
\newtheorem{defn}[thm]{Definition}
\def\A{{\mathcal A}}
\def \>{\succ}
\def \<{\prec}
\begin{document}	
	\title[Ahmed Zahari Abdou\textsuperscript{1}, Bouzid Mosbahi\textsuperscript{2}]{Computational Approaches to Derivations and Automorphism Groups of Associative Algebras}
	\author{Ahmed Zahari Abdou\textsuperscript{1}, Bouzid Mosbahi\textsuperscript{2}}

		\address{\textsuperscript{2}
IRIMAS-Department of Mathematics, Faculty of Sciences, University of Haute Alsace, Mulhouse, France}
		\address{\textsuperscript{1}Department of Mathematics, Faculty of Sciences, University of Sfax, Sfax, Tunisia}
\email{\textsuperscript{1}abdou-damdji.ahmed-zahari@uha.fr}
\email{\textsuperscript{2}mosbahi.bouzid.etud@fss.usf.tn}
	
	\keywords{Associative algebras, Structure constants, automorphism, derivation}
	\subjclass[2010]{16 D70}
	
	\date{\today}
    
	\begin{abstract}
This paper focuses on the derivations and automorphism groups of certain finite-dimensional associative algebras over the field of complex numbers. Using classification results for algebras of dimensions two, three, and four, along with computational tools like Mathematica and Maple, we offer detailed descriptions of the derivations and automorphism groups for these algebras. Our analysis of these groups helps to uncover important structural features and symmetries in low-dimensional associative algebras.
     \end{abstract}

\maketitle \section{ Introduction}\label{introduction}
Associative algebras are key structures in algebra, with deep connections to various other algebraic systems, such as Lie and Jordan algebras. These algebras are crucial for understanding fundamental mathematical concepts and have widespread applications in areas like physics, geometry, and computer science. Among the most studied associative algebras are those that are finite-dimensional. The classification of these algebras, especially those of low dimensions, has been an area of active research for many years.

The classification of finite-dimensional associative algebras has a long history. The first significant contributions came from Peirce \cite{1,2}, who laid the groundwork for the study of algebras. Later, in 1916, Hazlett \cite{3} provided a classification of nilpotent algebras of dimension up to four. In 1979, Mazzola \cite{4} advanced the field by studying five-dimensional algebras, using both algebraic and geometric methods. Despite these notable advancements, the complete classification of finite-dimensional associative algebras remains unsolved.

The study of associative algebras dates back even further to the work of famous mathematicians such as Hamilton, who discovered quaternions, and Cayley, who developed matrix theory. Over the years, other mathematicians, including Peirce, Clifford, Weierstrass, Dedekind, and Frobenius, made significant contributions to the theory of associative algebras. At the turn of the 20th century, mathematicians like Molin and Cartan made substantial progress in the study of semisimple algebras, particularly over real and complex fields.

This paper focuses on an important aspect of associative algebras: their derivations and automorphism groups. These concepts are central to understanding the symmetries and internal structure of algebras. The derivations of an algebra provide insight into how elements of the algebra can be differentiated in a way that preserves the algebra's structure. The automorphism group of an algebra describes the symmetries of the algebra, revealing how the algebra can be mapped onto itself while preserving its structure.

Using the classification results for associative algebras of dimensions two, three, and four \cite{10}, we provide explicit descriptions of the derivations and automorphism groups for these low-dimensional algebras. Computational tools such as Mathematica and Maple play a crucial role in performing the necessary calculations and visualizing these groups, making it possible to uncover important properties and symmetries in these algebras.

The paper is structured as follows. Section 2 introduces the key definitions and concepts needed to understand the topic. In Sections 3 and 4, we explore the derivations and automorphism groups of low-dimensional associative algebras, respectively. Throughout the paper, we focus on finite-dimensional associative algebras over the field of complex numbers, $\mathbb{C}$.

\section{ Prelimieries}

\begin{defn}\label{a}
An \textit{associative algebra} $A$ is a vector space over $\mathbb{C}$ equipped with bilinear map 	$\cdot: A \times A :\longrightarrow A$ satisfying
the associative law:
\begin{align*}
(x\cdot y)\cdot z&=x\cdot(y\cdot z), \quad \forall x, y, z \in A.
\end{align*}
\end{defn}

\begin{defn}\label{d1}
A linear transformation $D$ of an associative algebra $A$ is called a derivation if for any $x, y \in A$
\begin{align*}
D(x\cdot y) &= D(x)\cdot y + x\cdot D(y).
\end{align*}
The set of all derivations of an associative algebra $A$ is denoted by $Der(A)$.
\end{defn}
\begin{defn}
Let $H$ be a non empty subset of $A$. The set $Z_A(H) = \{x \in A : x.H = H.x = 0\}$.
is said to be centralizer of H in $A$.
It must however be noted that $Z_A(A) = Z(A)$, the center of $A$.
\end{defn}
\begin{defn}
Let $\phi \in End(A)$. If $\phi(A) \subseteq  Z(A)$ and $\phi(A^2) =0$, then $\phi$ is called a central derivation.
The set of all central derivations is denoted by $C(A)$.
\end{defn}

\begin{defn}
Let $(A_1, \cdot)$ and $(A_2, \ast)$ be two associative algebras over $\mathbb{C}$. A homomorphism between $A_1$ and $A_2$ is a $\mathbb{C}$-linear mapping $f : A_1 \longrightarrow A_2$ such that
\begin{align*}
f(x\cdot y) &= f(x) \ast f(y) \; \forall \; x, y \in A_1.
\end{align*}
The set of all homomorphism from $A_1$ to $A_2$ is denoted by
$Hom_{\mathbb{C}}(A_1,A_2)$. If $A_1 = A_2 = A$, then it is an associative algebra with respect to composition operation and denoted by $Hom_{\mathbb{C}}(A)$.
The linear mapping associated with $Hom_{\mathbb{C}}(A)$ is called an endomorphism. A bijective homomorphism is called isomorphism and the corresponding algebras are said to be isomorphic.
\end{defn}

\begin{defn}
	\textbf{Automorphism} of an algebra $(A,.)$ over a field $\mathbb{C}$ is a $\mathbb{C}$-linear transformation $f :A\longrightarrow A$
	preserving the operation of $A$, i.e
	
	\begin{itemize}
		\item $f(\alpha x+\beta y)=\alpha f(x)+\beta f(y)$ for any $\alpha, \beta\in \mathbb{F}$ and $x, y\in \mathcal{\A}$
		\item $f(x.y)=f(x).f(y)$ for any $x, y\in A$.
	\end{itemize}
	The set all automorphism of an algebra $A$ froms a group with respect to the composition. The automorphismgroup of $A$ is denoted by $Aut(A).$
\end{defn}

By utilizing the classification results \cite{10} and solving the equations, the derivations and automorphism groups can be determined and summarized.

\section{ An Algorithmic Approach to Derivations}

Let $\{e_1, e_2, \ldots, e_n\}$ be the basis of $A$, an $n$-dimensional complex associative algebra. The components of $e_i \cdot e_j$, for $i, j = 1, 2, \ldots, n$, define the structure constants of $A$ on the basis $\{e_1, e_2, \ldots, e_n\}$. Specifically, if  
\[
e_i \cdot e_j = \sum_{k=1}^n \gamma^k_{ij} e_k,
\]
then the set of structure constants of $A$ is denoted by  
\[
\{\gamma^k_{ij} \mid i, j, k \leq n\}.
\]  
These structure constants are defined over the field of complex numbers $\mathbb{C}$.

To determine the derivations of $A$, consider $D = (d_{ij})_{i,j=1,2,\ldots,n}$ as the matrix representation of a derivation $d$ with respect to the basis $\{e_1, e_2, \ldots, e_n\}$. A derivation satisfies the Leibniz rule:  
\[
D(e_i \cdot e_j) = D(e_i) \cdot e_j + e_i \cdot D(e_j).
\]  
Using the structure constants $\{\gamma^k_{ij}\}$, this translates into the following system of equations:  
\[
\sum_{k=1}^n \gamma^k_{ij} d_{tk} = \sum_{k=1}^n \left( d_{ki} \gamma^t_{kj} + d_{kj} \gamma^t_{ik} \right),
\]  
for $1 \leq i, j, t \leq n$.

This system provides a method to compute the derivations of $A$ explicitly.

\begin{thm}\label{thm1}
The \textbf{drivation} of $2$-dimensional associative algebras have the
following form:\\
$$
D(As_2^1) :\left(\begin{array}{cccc}
d_{11}&0\\
d_{21}&2d_{11}\\
\end{array}
\right)
\quad;
\quad
D(As_2^2) :\left(\begin{array}{cccc}
0&0\\
0&d_{22}\\
\end{array}
\right)
\quad;
\quad
D(As_2^3):\left(\begin{array}{cccc}
0&0\\
0&d_{22}\\
\end{array}
\right)
$$

$$
D(As_2^4):\left(\begin{array}{cccc}
0&0\\
d_{21}&d_{22}\\
\end{array}
\right)\quad;
\quad
D(As_2^5):\left(\begin{array}{cccc}
0&0\\
0&0
\end{array}
\right).
$$
\end{thm}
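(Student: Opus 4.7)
The plan is to treat each of the five algebras $As_2^i$ ($i = 1, \ldots, 5$) separately, invoking the explicit multiplication tables of the two-dimensional classification \cite{10} to fix the structure constants $\gamma^k_{ij}$ in the basis $\{e_1, e_2\}$. Writing a generic linear endomorphism as the matrix $D = (d_{ij})_{i,j=1,2}$ with $D(e_j) = d_{1j} e_1 + d_{2j} e_2$, I would apply the Leibniz rule $D(e_i \cdot e_j) = D(e_i) \cdot e_j + e_i \cdot D(e_j)$ to each of the four pairs $(i,j) \in \{1,2\}^2$. This specializes the general system $\sum_k \gamma^k_{ij} d_{tk} = \sum_k (d_{ki}\gamma^t_{kj} + d_{kj}\gamma^t_{ik})$ derived above to a linear system of at most $n \cdot n^2 = 8$ scalar equations in the four unknowns $d_{11}, d_{12}, d_{21}, d_{22}$.

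The second step is to solve this linear system case by case. As a representative instance, take $As_2^1$, whose only nonzero product is $e_1 \cdot e_1 = e_2$. Evaluating the Leibniz rule at $(i,j) = (1,1)$ gives $D(e_2) = 2\, D(e_1) \cdot e_1$, and expanding both sides in the basis yields $d_{12} e_1 + d_{22} e_2 = 2 d_{11} e_2$, which forces $d_{12} = 0$ and $d_{22} = 2 d_{11}$ while leaving $d_{11}$ and $d_{21}$ free; the three remaining pairs $(1,2), (2,1), (2,2)$ collapse to the trivial identity $0 = 0$. The same strategy, applied with each algebra's own multiplication table, yields the constraint pattern displayed for $As_2^2, \ldots, As_2^4$, and in the last case $As_2^5$ the products are rich enough that every Leibniz condition forces another entry of $D$ to vanish, collapsing $D$ to the zero matrix.

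The calculation is mechanical rather than conceptually deep, and the one step deserving care is verifying that none of the equations coming from the \emph{vanishing} products secretly constrains an entry listed as free. Concretely, for each algebra one must confirm that every tuple $(i,j,t)$ whose equation involves only the free entries reduces to a tautology; this is the main bookkeeping obstacle and the natural place for the symbolic tools (Mathematica, Maple) announced in the introduction to be deployed, both to solve the small linear systems and to cross-check that the five displayed matrix shapes indeed exhaust $\operatorname{Der}(As_2^i)$.
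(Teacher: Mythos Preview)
Your proposal is correct and follows essentially the same approach as the paper: write a generic $2\times 2$ matrix for $D$, impose the Leibniz rule on all basis pairs using the multiplication table of each $As_2^i$ from \cite{10}, solve the resulting linear constraints (carried out explicitly for $As_2^1$, then declared analogous for the remaining cases). Your write-up is in fact slightly more careful than the paper's, since you flag the need to verify that the pairs with vanishing products impose no hidden constraints; one minor point is that your shorthand $D(e_2)=2\,D(e_1)\cdot e_1$ silently uses $D(e_1)\cdot e_1 = e_1\cdot D(e_1)$, which holds here only because $As_2^1$ happens to be commutative, so in the general step you should keep both Leibniz terms separate.
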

\begin{proof}

Let \( \{e_1, e_2\} \) be a basis of the two-dimensional associative algebra \( As_2^1 \). Using the definition of a derivation, as in Definition \ref{d1}, we have:
\[
D(e_1 \cdot e_1) = D(e_2) = 2d_{11}e_2.
\]

Next, applying the derivation property:
\[
D(e_1) \cdot e_1 + e_1 \cdot D(e_1) = (d_{11}e_1 + d_{21}e_2) \cdot e_1 + e_1 \cdot (d_{11}e_1 + d_{21}e_2) = 2d_{11}e_2.
\]

Thus, it follows that \( D(x \cdot y) = D(x) \cdot y + x \cdot D(y) \) for all \( x, y \in \{e_1, e_2\} \).

Hence, the matrix representation of the derivation \( D \) is given by:
\[
D = \begin{pmatrix}
d_{11} & d_{12} \\
d_{21} & d_{22} \\
\end{pmatrix}
\quad \text{where} \quad \det(D) \neq 0.
\]

The derivation of the basis elements can be written as:
\[
D(e_1) = d_{11}e_1 + d_{21}e_2, \quad D(e_2) = d_{12}e_1 + d_{22}e_2.
\]

Since for two-dimensional algebras, we have \( e_i \cdot e_j \) with \( i, j = 1, 2 \), we must consider four cases. By using the algebra \( As_2^1 \) where \( e_1e_1 = e_2 \), we find that \( d_{12} = 0 \). As a result, we obtain the following matrix representation for the derivation:
\[
D(As_2^1) = \begin{pmatrix}
d_{11} & 0 \\
d_{21} & 2d_{11} \\
\end{pmatrix}.
\]

Using similar methods for all algebras in Theorem \ref{thm1}, we obtain the derivations of all two-dimensional algebras as shown above.
\end{proof}

\begin{thm}\label{thm2}
The \textbf{drivation} of $3$-dimensional associative algebras have the following form:

$
D(As_3^1) :
\begin{array}{ll}
\left(\begin{array}{cccc}
d_{11}&0&0\\
d_{21}&d_{22}&d_{23}\\
0&0&d_{22}-d_{11}
\end{array}
\right)
\end{array}
 ;
D(As_3^2) :
\begin{array}{ll}
\left(\begin{array}{cccc}
d_{11}&0&0\\
d_{21}&d_{22}&d_{23}\\
0&0&d_{22}-d_{11}
\end{array}
\right)
\end{array}
;
D(As_3^3) :
\begin{array}{ll}
\left(\begin{array}{cccc}
d_{11}&0&0\\
d_{21}&2d_{11}&0\\
d_{31}&2d_{21}&3d_{11}
\end{array}
\right)
\end{array}
$

$
D(As_3^4) :
\begin{array}{ll}
\left(\begin{array}{cccc}
d_{11}&0&0\\
d_{21}&d_{11}+d_{21}&d_{23}\\
0&0&0
\end{array}
\right)
\end{array}
;
D(As_3^5) :
\begin{array}{ll}
\left(\begin{array}{cccc}
d_{11}&0&d_{13}\\
0&d_{22}&d_{23}\\
0&0&0
\end{array}
\right)
\end{array}
;
D(As_3^6) :
\begin{array}{ll}
\left(\begin{array}{cccc}
d_{11}&0&0\\
d_{21}&d_{11}+d_{21}&d_{23}\\
0&0&0
\end{array}
\right)
\end{array}
$

$
D(As_3^7) :
\begin{array}{ll}
\left(\begin{array}{cccc}
d_{11}&d_{12}&-d_{12}\\
0&0&0\\
0&0&0
\end{array}
\right)
\end{array}
;
D(As_3^8) :
\begin{array}{ll}
\left(\begin{array}{cccc}
d_{11}&0&0\\
0&d_{22}&d_{23}\\
0&0&0
\end{array}
\right)
\end{array}
;
D(As_3^9) :
\begin{array}{ll}
\left(\begin{array}{cccc}
d_{11}&0&d_{13}\\
0&d_{22}&0\\
0&0&0
\end{array}
\right)
\end{array}
$

$
D(As_3^{10}) :
\begin{array}{ll}
\left(\begin{array}{cccc}
d_{11}&d_{12}&0\\
d_{21}&d_{22}&0\\
0&0&0
\end{array}
\right)
\end{array}
;
D(As_3^{11}) :
\begin{array}{ll}
\left(\begin{array}{cccc}
d_{11}&0&0\\
d_{21}&d_{11}+d_{21}&0\\
0&0&0
\end{array}
\right)
\end{array}
;
D(As_3^{12}) :
\begin{array}{ll}
\left(\begin{array}{cccc}
d_{11}&0&0\\
d_{21}&2d_{11}&0\\
0&0&0
\end{array}
\right)
\end{array}.
$
\end{thm}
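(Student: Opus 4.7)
The plan is to apply, algebra by algebra, the algorithmic procedure laid out in Section 3 and already executed for the two-dimensional case in Theorem \ref{thm1}. For each isomorphism class $As_3^i$ ($i=1,\dots,12$) taken from the classification in \cite{10}, one writes a generic endomorphism as a $3\times 3$ matrix $D=(d_{ij})$ with complex entries, so that $D(e_j)=\sum_{i=1}^{3} d_{ij}\,e_i$ for $j=1,2,3$, and then imposes the Leibniz identity $D(e_i\cdot e_j)=D(e_i)\cdot e_j + e_i\cdot D(e_j)$ on every ordered pair $(i,j)\in\{1,2,3\}^2$. Using the structure constants $\gamma^k_{ij}$ of the fixed algebra, this yields exactly the homogeneous linear system
\[
\sum_{k=1}^3 \gamma^k_{ij}\,d_{tk} \;=\; \sum_{k=1}^3\bigl(d_{ki}\,\gamma^t_{kj} + d_{kj}\,\gamma^t_{ik}\bigr),\qquad 1\le i,j,t\le 3,
\]
in the nine unknowns $d_{ij}$.

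The next step is to solve this linear system, one algebra at a time. Because the multiplication tables of the $As_3^i$ are sparse, most of the Leibniz equations will either collapse to equations of the form $d_{ij}=0$ or force affine relations of the type $d_{kk}=m\,d_{11}$, in direct analogy with the relation $d_{22}=2d_{11}$ that already appeared for $As_2^1$ in Theorem \ref{thm1}. Reading off the surviving free parameters then produces the twelve matrices displayed in the statement; for instance, the entries $2d_{11}$, $3d_{11}$ and $2d_{21}$ in $D(As_3^3)$ should emerge from the nilpotent cyclic-type multiplication of $As_3^3$ in much the same way as the single relation $d_{22}=2d_{11}$ emerged from $e_1\cdot e_1=e_2$ in $As_2^1$. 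Cases in which two basis vectors play ``symmetric'' roles (e.g.\ $As_3^5$, $As_3^9$, $As_3^{10}$) will produce block-diagonal or upper-triangular shapes, while the more rigid algebras $As_3^7$ and $As_3^{12}$ will collapse to just one or two free parameters.

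As a consistency check, each candidate matrix should be re-substituted into the Leibniz identity on basis elements in order to verify that no extraneous solutions of the reduced linear system have been retained. The main obstacle is not conceptual but organizational: with twelve algebras and up to twenty-seven scalar equations per algebra, the elimination is heavy and error-prone by hand, which is precisely why the computer algebra systems Mathematica and Maple mentioned in the introduction are used to carry out the reduction and to cross-check the final normal forms displayed in the theorem.
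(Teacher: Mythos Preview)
Your proposal is correct and follows essentially the same approach as the paper: the paper's own proof carries out the Leibniz-rule computation explicitly for the single algebra $As_3^1$ (with relations $e_1e_3=e_2$, $e_3e_1=e_2$) to obtain the displayed matrix, and then simply states that ``using similar methods for all algebras'' one obtains the remaining eleven cases. Your write-up describes exactly this algorithmic procedure, only phrased more generally and with an explicit mention of the computer-algebra verification that the paper invokes in its introduction.
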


\begin{proof}

Let \( \{e_1, e_2, e_3\} \) be a basis of the three-dimensional associative algebra \( As_3^1 \), where the relations are given by \( e_1 e_3 = e_2 \) and \( e_3 e_1 = e_2 \). Using the definition of a derivation, as in Definition 2.2, we have:

\[
D(e_1 \cdot e_3) = D(e_2) =d_{22}e_2, \quad D(e_1).e_3+e_1.D(e_3)=d_{22}e_2.
\]

Next, applying the derivation property:

\[
D(e_3.e_1)=D(e_2)=d_{22}e_2, \quad  D(e_3).e_1+e_3.D(e_1)=d_{22}e_2.
\]

Thus, it follows that \( D(x \cdot y) = D(x) \cdot y + x \cdot D(y) \) for all \( x, y \in \{e_1, e_2, e_3\} \).

Hence, the matrix representation of the derivation \( D \) is given by:

\[
D = \begin{pmatrix}
d_{11} & 0 & 0 \\
d_{21} & d_{22} & d_{23} \\
0 & 0 & d_{22} - d_{11}
\end{pmatrix}
\quad \text{where} \quad \det(D) \neq 0.
\]

The derivation of the basis elements can be written as:
\[
D(e_1) = d_{11}e_1 + d_{21}e_2, \quad D(e_2) = d_{22}e_1 + d_{23}e_2, \quad D(e_3) = (d_{22} - d_{11})e_3.
\]

Since for the three-dimensional algebra, we have \( e_1 e_3 = e_2 \) and \( e_3 e_1 = e_2 \), we must consider all the corresponding products. By using the relations in \( As_3^1 \), we obtain the following matrix representation for the derivation:

\[
D(As_3^1) = \begin{pmatrix}
d_{11} & 0 & 0 \\
d_{21} & d_{22} & d_{23} \\
0 & 0 & d_{22} - d_{11}
\end{pmatrix}.
\]
Using similar methods for all algebras in Theorem 3.1, we obtain the derivations of all three-dimensional algebras as shown above.
\end{proof}

\begin{thm}\label{thm3}
The \textbf{drivation} of $4$-dimensional associative algebras have the following form:\\

$
D(As_4^{1}) :
\left(\begin{array}{cccc}
d_{11}&0&0&0\\
0&d_{22}&0&0\\
d_{31}&d_{32}&d_{11}+d_{22}&0\\
d_{41}&d_{42}&0&d_{11}+d_{22}\\
\end{array}
\right)
\quad
 ;
\quad
D(As_4^{2}) :
\left(\begin{array}{cccc}
d_{11}&0&0&0\\
0&d_{11}&0&0\\
d_{31}&d_{32}&2d_{11}&0\\
d_{41}&d_{42}&0&2d_{11}
\end{array}
\right)
$

$D(As_4^{3}) :
\left(\begin{array}{cccc}
d_{11}&0&0&0\\
0&d_{11}&0&0\\
d_{31}&d_{32}&2d_{11}&0\\
d_{41}&d_{42}&0&2d_{11}
\end{array}
\right)
\quad
 ;
\quad
D(As_4^{4}) :
\left(\begin{array}{cccc}
d_{11}&d_{12}&0&0\\
0&d_{22}&0&0\\
d_{31}&d_{32}&d_{11}+d_{22}&0\\
d_{41}&d_{42}&0&2d_{11}
\end{array}
\right)$

$D(As_4^{5}) :
\left(\begin{array}{cccc}
d_{11}&d_{12}&0&0\\
d_{21}&d_{22}&0&0\\
0&0&\frac{d_{11}}{2}+\frac{d_{22}}{2}&0\\
d_{41}&d_{42}&d_{43}&d_{11}+d_{22}
\end{array}
\right)
\quad
 ;
\quad
D(As_4^{6}) :
\left(\begin{array}{cccc}
d_{11}&d_{12}&0&0\\
0&d_{22}&0&0\\
d_{31}&d_{32}&2d_{22}&0\\
d_{41}&d_{42}&-\frac{2d_{12}}{\alpha-1}&d_{11}+d_{22}
\end{array}
\right)$

$D(As_4^{7}) :
\left(\begin{array}{cccc}
0&0&0&0\\
d_{21}&d_{22}&0&0\\
0&d_{32}&d_{33}&0\\
d_{32}&0&0&d_{33}-d_{22}
\end{array}
\right)
\quad
 ;
\quad
D(As_4^{8}) :
\left(\begin{array}{cccc}
0&0&0&0\\
d_{21}&d_{22}&d_{23}&d_{24}\\
-d_{24}&0&d_{33}&0\\
0&0&0&d_{22}-d_{33}
\end{array}
\right)$

$D(As_4^{9}) :
\left(\begin{array}{cccc}
0&0&0&0\\
d_{21}&d_{22}&d_{23}&d_{24}\\
-d_{24}&0&d_{33}&0\\
0&0&0&d_{22}-d_{33}
\end{array}
\right)
\quad
 ;
\quad
D(As_4^{10}) :
\left(\begin{array}{cccc}
0&0&0&0\\
0&0&0&0\\
0&0&0&0\\
d_{41}&d_{42}&0&0
\end{array}
\right)$

$D(As_4^{11}) :
\left(\begin{array}{cccc}
d_{11}&0&0&0\\
d_{21}&2d_{22}&0&0\\
d_{31}&d_{32}&3d_{11}&d_{34}\\
\frac{d_{21}-d_{34}}{2}&0&0&2d_{11}\\
\end{array}
\right)
\quad
 ;
\quad
D(As_4^{12}) :
\left(\begin{array}{cccc}
0&0&0&0\\
d_{21}&d_{22}&0&0\\
d_{31}&0&d_{33}&d_{34}\\
d_{41}&0&d_{43}&d_{44}
\end{array}
\right)$

$D(As_4^{13}) :
\left(\begin{array}{cccc}
d_{11}&d_{12}&0&0\\
0&0&0&0\\
0&d_{32}&d_{33}&d_{34}\\
0&d_{42}&d_{43}&d_{44}
\end{array}
\right)
\quad
 ;
\quad
D(As_4^{14}) :
\left(\begin{array}{cccc}
0&0&0&0\\
d_{21}&d_{22}&d_{23}&d_{24}\\
d_{31}&d_{32}&d_{33}&d_{34}\\
d_{41}&d_{42}&d_{43}&d_{44}
\end{array}
\right)$

$D(As_4^{15}) :
\left(\begin{array}{cccc}
d_{11}&d_{12}&d_{31}&d_{41}\\
0&0&0&0\\
d_{31}&d_{32}&d_{33}&d_{34}\\
d_{41}&d_{42}&d_{43}&d_{44}
\end{array}
\right)
\quad
 ;
\quad
D(As_4^{16}) :
\left(\begin{array}{cccc}
d_{11}&0&0&0\\
\frac{-2d_{11}+d_{44}}{1+\alpha}&d_{44}-d_{11}&0&0\\
0&0&\frac{d_{44}}{2}&0\\
d_{41}&d_{42}&d_{43}&d_{44}
\end{array}
\right)$

$D(As_4^{17}) :
\left(\begin{array}{cccc}
d_{11}&0&0&0\\
0&d_{11}&0&0\\
d_{31}&d_{32}&2d_{11}&0\\
d_{41}&d_{42}&0&2d_{11}
\end{array}
\right)
\quad
 ;
\quad
D(As_4^{18}) :
\left(\begin{array}{cccc}
d_{11}&0&0&0\\
0&d_{11}&0&0\\
d_{31}&d_{32}&2d_{11}&0\\
d_{41}&d_{42}&0&2d_{11}
\end{array}
\right)$

$D(As_4^{19}) :
\left(\begin{array}{cccc}
0&0&0&0\\
0&0&0&0\\
d_{31}&-d_{31}&d_{33}&0\\
d_{41}&0&0&d_{44}
\end{array}
\right)
\quad
 ;
\quad
D(As_4^{20}) :
\left(\begin{array}{cccc}
0&0&0&0\\
0&0&0&0\\
d_{31}&-d_{31}&d_{33}&0\\
d_{41}&0&0&d_{44}
\end{array}
\right)$

$D(As_4^{21}) :
\left(\begin{array}{cccc}
0&0&0&0\\
0&0&0&0\\
0&d_{32}&d_{33}&0\\
d_{41}&-d_{41}&0&d_{44}\\
\end{array}
\right)
\quad
 ;
\quad
D(As_4^{22}) :
\left(\begin{array}{cccc}
d_{11}&d_{12}&-d_{12}&0\\
0&0&0&0\\
0&0&0&0\\
0&d_{42}&0&d_{44}\\
\end{array}
\right)$

$D(As_4^{23}) :
\left(\begin{array}{cccc}
0&0&0&0\\
d_{21}&0&0&0\\
d_{31}&d_{32}&0&d_{34}\\
\frac{d_{21}-d_{34}}{2}&-d_{21}&0&0
\end{array}
\right)
\quad
 ;
\quad
D(As_4^{24}) :
\left(\begin{array}{cccc}
d_{11}&0&0&0\\
0&0&0&0\\
0&0&d_{33}&0\\
0&0&0&0
\end{array}
\right)$

$D(As_4^{25}) :
\left(\begin{array}{cccc}
d_{11}&0&0&0\\
d_{21}&d_{22}&0&0\\
0&0&d_{22}&0\\
d_{41}&d_{42}&d_{43}&d_{11}+d_{22}
\end{array}
\right)
\quad
 ;
\quad
D(As_4^{26}) :
\left(\begin{array}{cccc}
0&0&0&0\\
0&d_{22}&0&0\\
d_{31}&0&d_{33}&d_{34}\\
d_{41}&0&d_{43}&d_{44}
\end{array}
\right)$

$D(As_4^{27}) :
\left(\begin{array}{cccc}
0&0&0&0\\
0&d_{22}&0&0\\
d_{31}&0&d_{33}&d_{34}\\
d_{41}&0&d_{43}&d_{44}
\end{array}
\right)
\quad
 ;
\quad
D(As_4^{28}) :
\left(\begin{array}{cccc}
\frac{d_{44}}{2}&d_{12}&0&0\\
-d_{12}&\frac{d_{44}}{2}&0&0\\
0&0&\frac{d_{44}}{2}&0\\
d_{41}&d_{42}&d_{43}&d_{44}
\end{array}
\right)$

$D(As_4^{29}) :
\left(\begin{array}{cccc}
0&0&0&0\\
0&0&0&0\\
0&0&d_{33}&0\\
d_{41}&-d_{41}&0&d_{44}
\end{array}
\right)
\quad
 ;
\quad
D(As_4^{30}) :
\left(\begin{array}{cccc}
0&0&0&0\\
0&0&0&0\\
0&0&d_{33}&0\\
d_{41}&-d_{41}&0&d_{44}
\end{array}
\right)$

$D(As_4^{31}) :
\left(\begin{array}{cccc}
0&0&0&0\\
0&0&0&0\\
d_{31}&-d_{31}&d_{33}&0\\
d_{41}&-d_{41}&0&d_{44}
\end{array}
\right)
\quad
 ;
\quad
D(As_4^{32}) :
\left(\begin{array}{cccc}
d_{11}&0&0&0\\
0&0&0&0\\
0&0&d_{11}&0\\
0&0&0&0
\end{array}
\right)$

$D(As_4^{33}) :
\left(\begin{array}{cccc}
0&0&0&0\\
0&0&0&0\\
0&0&0&0\\
0&0&0&0\\
\end{array}
\right)
\quad
 ;
\quad
D(As_4^{34}) :
\left(\begin{array}{cccc}
d_{11}&d_{12}&d_{14}&0\\
0&0&0&0\\
0&-d_{14}&d_{33}&0\\
0&0&0&d_{11}-d_{33}
\end{array}
\right)$

$D(As_4^{35}) :
\left(\begin{array}{cccc}
d_{11}&d_{12}&d_{13}&0\\
0&0&0&0\\
0&0&d_{33}&0\\
0&0&0&d_{11}-d_{33}
\end{array}
\right)
\quad
 ;
\quad
D(As_4^{36}) :
\left(\begin{array}{cccc}
0&0&0&0\\
0&0&0&0\\
0&0&d_{33}&0\\
0&0&0&0
\end{array}
\right)$

$D(As_4^{37}) :
\left(\begin{array}{cccc}
0&0&0&0\\
0&d_{22}&0&0\\
0&0&0&0\\
0&0&0&d_{44}\\
\end{array}
\right)
\quad
 ;
\quad
D(As_4^{38}) :
\left(\begin{array}{cccc}
0&0&0&0\\
0&d_{22}&d_{23}&0\\
0&d_{32}&d_{33}&0\\
d_{41}&0&0&d_{44}
\end{array}
\right)$

$D(As_4^{39}) :
\left(\begin{array}{cccc}
0&0&0&0\\
0&d_{22}&0&0\\
0&d_{23}&2d_{22}&0\\
d_{41}&0&0&d_{44}
\end{array}
\right)
\quad
 ;
\quad
D(As_4^{40}) :
\left(\begin{array}{cccc}
0&0&0&0\\
0&d_{22}&d_{23}&d_{24}\\
0&d_{33}&d_{33}&d_{34}\\
0&d_{42}&d_{43}&d_{44}
\end{array}
\right)$

$D(As_4^{41}) :
\left(\begin{array}{cccc}
0&0&0&0\\
0&d_{22}&0&0\\
0&d_{32}&2d_{22}&0\\
d_{41}&0&0&d_{44}
\end{array}
\right)
\quad
 ;
\quad
D(As_4^{42}) :
\left(\begin{array}{cccc}
0&0&0&0\\
0&0&0&0\\
0&0&0&0\\
0&0&0&0\\
\end{array}
\right)$

$D(As_4^{43}) :
\left(\begin{array}{cccc}
0&0&0&0\\
0&d_{22}&0&0\\
0&d_{32}&d_{33}&0\\
0&d_{42}&d_{43}&2d_{22}
\end{array}
\right)
\quad
 ;
\quad
D(As_4^{44}) :
\left(\begin{array}{cccc}
0&0&0&0\\
0&d_{22}&0&0\\
0&0&d_{33}&0\\
0&d_{42}&d_{43}&d_{22}+d_{33}
\end{array}
\right)$

$D(As_4^{45}) :
\left(\begin{array}{cccc}
0&0&0&0\\
0&d_{22}&0&0\\
0&d_{32}&2d_{22}&0\\
0&d_{42}&2d_{32}&3d_{22}\\
\end{array}
\right)
\quad
 ;
\quad
D(As_4^{46}) :
\left(\begin{array}{cccc}
0&0&0&0\\
0&d_{22}&0&0\\
0&d_{32}&d_{22}&0\\
0&d_{42}&d_{43}&2d_{22}\\
\end{array}
\right)$
\end{thm}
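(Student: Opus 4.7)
The plan is to carry out, for each of the 46 algebras $As_4^{k}$, the same procedure already used for Theorems \ref{thm1} and \ref{thm2}, but now in dimension four and driven by the structure constants furnished by the classification of \cite{10}. Fix a basis $\{e_1,e_2,e_3,e_4\}$ and write a general candidate $D\in End(A)$ as a $4\times 4$ matrix $(d_{ij})$ with $D(e_j)=\sum_{i=1}^{4}d_{ij}e_i$. For each algebra I would first list its nonzero products $e_i\cdot e_j$ (noting parametric cases such as $As_4^{6}$ and $As_4^{16}$ which carry the parameter $\alpha$), and then impose the Leibniz identity
\[
D(e_i\cdot e_j)=D(e_i)\cdot e_j+e_i\cdot D(e_j)
\]
for every pair $(i,j)\in\{1,\dots,4\}^{2}$. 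Expanding both sides in the basis and equating coefficients yields, via the general system
\[
\sum_{k=1}^{4}\gamma_{ij}^{k}d_{tk}=\sum_{k=1}^{4}\bigl(d_{ki}\gamma_{kj}^{t}+d_{kj}\gamma_{ik}^{t}\bigr),\qquad 1\leq i,j,t\leq 4,
\]
a linear system in the sixteen unknowns $d_{ij}$ whose solution space is exactly $Der(As_4^{k})$.

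Second, I would solve these systems one algebra at a time. For a typical entry such as $As_4^{1}$, the product relations quickly force $d_{12}=d_{13}=d_{14}=d_{21}=d_{23}=d_{24}=d_{33}-d_{11}-d_{22}=d_{43}=d_{34}=d_{44}-d_{11}-d_{22}=0$, while leaving $d_{11},d_{22},d_{31},d_{32},d_{41},d_{42}$ free, which reproduces the stated matrix. The same method handles the parametric algebras, where the entry $-2d_{12}/(\alpha-1)$ in $D(As_4^{6})$ or $(-2d_{11}+d_{44})/(1+\alpha)$ in $D(As_4^{16})$ is forced precisely by the $\alpha$-dependent products (valid under the classification hypothesis $\alpha\neq 1$ and $\alpha\neq -1$ respectively). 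The algebras whose products are all zero after a certain point, for example $As_4^{33}$ and $As_4^{42}$, give only trivial constraints and recover the tabulated forms directly.

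Third, because of the sheer bookkeeping involved, I would organize the computation as in the algorithmic scheme described just before Theorem \ref{thm1}: generate the 64 coefficient equations from the structure constants, reduce them with a symbolic solver (Mathematica or Maple, as advertised in the introduction), and read off a parametrization of the kernel. For each $k$, a final consistency check consists in substituting the resulting matrix back into the Leibniz identity on every basis pair and verifying that both sides coincide, which also confirms the linear independence of the surviving free parameters $d_{ij}$ displayed in the theorem.

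The main obstacle is not conceptual but combinatorial: the method is identical to that of Theorem \ref{thm1}, yet one must handle $46$ algebras, each producing up to $64$ scalar equations, with careful case analysis for the parametric families and for algebras whose structure constants collapse many equations to tautologies. Ensuring that the parameter restrictions inherited from the classification are respected, and that no free parameter is inadvertently eliminated or retained, is the delicate point; the computer algebra verification outlined above is what makes the exhaustive case-by-case listing reliable.
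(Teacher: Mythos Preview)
Your proposal is correct and follows essentially the same approach as the paper: set up a generic $4\times 4$ matrix $(d_{ij})$, impose the Leibniz identity on all basis pairs using the structure constants from \cite{10}, solve the resulting linear system (with computer algebra assistance), and repeat for each of the 46 algebras. The paper's proof illustrates this on $As_4^{2}$ rather than $As_4^{1}$ and is in fact less detailed than your outline, but the method is identical.
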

\begin{proof}

Let \( \{e_1, e_2, e_3, e_4\} \) be a basis of the four-dimensional associative algebra \( As_4^2 \), where the multiplication rules are given by \( e_1e_2 = e_4 \) and \( e_3e_1 = e_4 \). Using the definition of a derivation, as in Definition 2.2, we have:

\[
D(e_1 \cdot e_2) = D(e_4) = 2d_{11}e_4, \quad D(e_1) \cdot e_2 + e_1 \cdot D(e_2)=2d_{11}.
\]

Next, applying the derivation property:

\[
D(e_3 \cdot e_1) = D(e_4) = 2d_{11}e_4, \quad D(e_3) \cdot e_1 + e_3 \cdot D(e_1)=2d_{11}.
\]

Thus, it follows that \( D(x \cdot y) = D(x) \cdot y + x \cdot D(y) \) for all \( x, y \in \{e_1, e_2, e_3, e_4\} \).

Hence, the matrix representation of the derivation \( D \) is given by:

\[
D = \begin{pmatrix}
d_{11} & 0 & 0 & 0 \\
0 & d_{11} & 0 & 0 \\
d_{31} & d_{32} & 2d_{11} & 0 \\
d_{41} & d_{42} & 0 & 2d_{11}
\end{pmatrix}
\quad \text{where} \quad \det(D) \neq 0.
\]
The derivation of the basis elements can be written as:
	\begin{align*}
		D(e_1)&=d_{11}e_1 + d_{31}e_3 + d_{41}e_4, \\
		D(e_2)&= d_{11}e_2 + d_{32}e_3 + d_{42}e_4, \\
		D(e_3)&=d_{31}e_1 + d_{32}e_2 + 2d_{11}e_3,\\
        D(e_4)&=d_{41}e_1 + d_{42}e_2 + 2d_{11}e_4.
	\end{align*}
Since for the four-dimensional algebra \( As_4^2 \), we have the relations \( e_1e_2 = e_4 \) and \( e_3e_1 = e_4 \), we need to consider the algebra structure and corresponding derivations for all possible combinations. After applying the algebra rules, we obtain the following matrix representation for the derivation:

\[
D(As_4^2) = \begin{pmatrix}
d_{11} & 0 & 0 & 0 \\
0 & d_{11} & 0 & 0 \\
d_{31} & d_{32} & 2d_{11} & 0 \\
d_{41} & d_{42} & 0 & 2d_{11}
\end{pmatrix}.
\]
Using similar methods for all algebras in Theorem 3.1, we obtain the derivations of all four-dimensional algebras as shown above.
\end{proof}

\begin{cor}\,
\begin{itemize}
	\item The dimensions of the derivation of $2$-dimensional associative algebras range between $0$ and $2$.
	\item The dimensions of the derivation of $3$-dimensional associative algebras range between $2$ and $4$.
	\item The dimensions of the derivation of $4$-dimensional associative algebras range between $0$ and $12$.
	\end{itemize}
\end{cor}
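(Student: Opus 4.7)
The plan is to read off each bound directly from the explicit matrix descriptions furnished by Theorems \ref{thm1}, \ref{thm2}, and \ref{thm3}. Since $Der(A)$ is a vector subspace of $\mathrm{End}(A)$, its dimension equals the number of free scalar parameters appearing independently in the general derivation matrix. So the corollary will follow once I catalog the parameter counts algebra by algebra and extract the extremes.

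First I would treat the two-dimensional case by inspection of Theorem \ref{thm1}: the matrix for $D(As_2^1)$ involves the two free parameters $d_{11},d_{21}$, those for $D(As_2^2)$ and $D(As_2^3)$ have the single parameter $d_{22}$, $D(As_2^4)$ depends on $d_{21},d_{22}$, and $D(As_2^5)$ is identically zero. Collecting these gives dimensions $2,1,1,2,0$, whose minimum is $0$ and maximum is $2$. Next, for the three-dimensional case I would run the same inventory on the twelve matrices of Theorem \ref{thm2}: the algebras $As_3^1$, $As_3^2$, $As_3^5$, and $As_3^{10}$ each contribute four free entries; $As_3^3$, $As_3^4$, $As_3^6$, $As_3^8$, and $As_3^9$ each contribute three; and $As_3^7$, $As_3^{11}$, $As_3^{12}$ each contribute two. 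Thus the range is $2$ to $4$. Finally, for the four-dimensional case I would sweep through the forty-six matrices of Theorem \ref{thm3}, recording each free parameter (and carefully accounting for the linear dependencies such as $d_{22}=d_{11}+d_{21}$, $d_{34}=(d_{21}-d_{44})/2$, and the $\alpha$-dependent relations in $As_4^6$ and $As_4^{16}$, which reduce the count by one). The zero matrices of $As_4^{33}$ and $As_4^{42}$ give the lower bound $0$, while $As_4^{14}$, whose derivation matrix has an arbitrary zero first row and twelve otherwise unconstrained entries, achieves the upper bound $12$.

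The only genuine work is the bookkeeping in the four-dimensional list, since a miscount of linear constraints in any single row would shift the claimed bound. To guard against this I would parametrize each derivation matrix by an explicit tuple of independent scalars, verify that no further relation is imposed, and then compare to the two candidate extremal cases $As_4^{14}$ and $As_4^{33}$. Since every matrix in Theorem \ref{thm3} is sandwiched between these two, the range $0$ to $12$ follows. The proof is essentially a tabulation, so no substantive algebraic difficulty is expected beyond careful counting.
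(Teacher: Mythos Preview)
Your approach is correct and is exactly what the paper does: the corollary is stated immediately after Theorems \ref{thm1}--\ref{thm3} with no separate proof, since it follows by counting the free parameters in each listed derivation matrix. Your tabulation of the extremal cases ($As_2^5$ and $As_2^1$/$As_2^4$; $As_3^7$/$As_3^{11}$/$As_3^{12}$ and $As_3^1$/$As_3^2$/$As_3^5$/$As_3^{10}$; $As_4^{33}$/$As_4^{42}$ and $As_4^{14}$) matches the paper's data, and the bookkeeping you outline is precisely the intended argument.
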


\section{ An Algorithmic Approach to Automorphism Groups}
 
Let \( A \) be an \( n \)-dimensional associative algebra over the field \( \mathbb{C} \), with the basis \( \{e_1, e_2, \dots, e_n\} \) and structure constants \( \gamma_{ij}^k \) such that:
\[
e_i \cdot e_j = \sum_{k=1}^n \gamma_{ij}^k e_k \quad \text{for} \quad i, j = 1, 2, \dots, n.
\]

The automorphism \( f \) can be expressed in terms of its action on the basis \( \{e_1, e_2, \dots, e_n\} \):
\[
f(e_i) = \sum_{j=1}^n a_{ji} e_j \quad \text{for} \quad i = 1, 2, \dots, n,
\]
where \( a_{ji} \) are complex coefficients. This means that the map \( f \) can be represented by the matrix \( A = (a_{ji}) \), with the components \( a_{ji} \) being the matrix entries of \( f \).

Equating the two expressions for \( f(e_i \cdot e_j) \) and \( f(e_i) \cdot f(e_j) \), we get the system of equations:

\[
\sum_{k=1}^n \gamma_{ij}^k a_{lk} = \sum_{p=1}^n \sum_{q=1}^n a_{pi} a_{qj} \gamma_{pq}^k, \quad \text{for each} \ i, j, k = 1, 2, \dots, n.
\]
These are \( n^3 \) equations for the \( n^2 \) unknowns \( a_{ji} \).

The solution set will form the automorphism group \( \text{Aut}(A) \).

\begin{thm}\label{thm4}
The \textbf{automorphism groups} of $2$-dimensional associative algebras have the
following form:
$Aut(As_2^1) =\left\{\begin{array}{ll}
\left(\begin{array}{cccc}
a_{11}&0\\
a_{21}&a_{11}^2\\
\end{array}
\right)
\end{array},\,
a_{11} \neq 0
\right\}
$
;

$
Aut(As_2^2) =\left\{
\begin{array}{ll}
\left(\begin{array}{cccc}
1&0\\
0&a_{22}\\
\end{array}
\right)
\end{array},\,
a_{22}\neq 0
\right\}
\quad ;
\\
Aut(As_2^3)=\left\{\begin{array}{ll}
\left(\begin{array}{cccc}
1&0\\
a_{21}&a_{22}\\
\end{array}
\right)
\end{array},\,
a_{22}\neq 0
\right\}$ ;

$
Aut(As_2^4)=\left\{\begin{array}{ll}
\left(\begin{array}{cccc}
1&0\\
a_{21}&a_{22}\\
\end{array}
\right)
\end{array}, \,
a_{22}\neq 0\right\}
$
;

$
Aut(As_2^5)=\left\{\begin{array}{ll}
\left(\begin{array}{cccc}
1&0\\
0&1
\end{array}
\right)
\end{array},
\left(\begin{array}{cccc}
1&2\alpha\\
0&-1\\
\end{array}
\right)
\right\}
$.
\end{thm}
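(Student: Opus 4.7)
The plan is to parameterize every candidate automorphism $f$ by its matrix $A=(a_{ij})$ with respect to the basis $\{e_1,e_2\}$, writing $f(e_1)=a_{11}e_1+a_{21}e_2$ and $f(e_2)=a_{12}e_1+a_{22}e_2$, and then impose the multiplicativity condition $f(e_i\cdot e_j)=f(e_i)\cdot f(e_j)$ for each of the four pairs $(i,j)\in\{1,2\}^2$, together with $\det(A)\neq 0$. Substituting the structure constants of $As_2^{k}$ into the general system
\[
\sum_{k=1}^{n}\gamma_{ij}^{k}\,a_{lk}=\sum_{p,q=1}^{n} a_{pi}\,a_{qj}\,\gamma_{pq}^{l}
\]
reduces this to a small collection of polynomial equations in the four unknowns $a_{11},a_{12},a_{21},a_{22}$ for each algebra, which I would solve explicitly using the classification of \cite{10}.

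I would handle the five algebras in the order listed. For $As_2^{1}$, in which $e_1\cdot e_1=e_2$ and all other products vanish, the identity $f(e_1)\cdot f(e_1)=f(e_2)$ expands to $a_{11}^{2}e_2=a_{12}e_1+a_{22}e_2$, forcing $a_{12}=0$ and $a_{22}=a_{11}^{2}$, while the remaining three product conditions vanish automatically; invertibility becomes $a_{11}\neq 0$, matching the stated form. For $As_2^{2},As_2^{3},As_2^{4}$, the multiplication tables from \cite{10} involve a one-sided unit or idempotent-like relation, and imposing the corresponding condition of the form $f(e_1)\cdot f(e_j)=f(e_j)$ pins $a_{11}=1$ and, in most cases, $a_{12}=0$; the remaining entries are then free subject to $a_{22}\neq 0$. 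These four cases are essentially a book-keeping exercise once the multiplication rules are in hand.

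The main obstacle is $As_2^{5}$, because its automorphism set is not a smooth parametric family of the same shape as the others but the disjoint union of the identity with a one-parameter family of matrices $\begin{pmatrix}1&2\alpha\\0&-1\end{pmatrix}$. The homomorphism equations here must split into branches corresponding to a sign ambiguity in a quadratic constraint such as $a_{11}^{2}=1$; the careful step is to show that the system forces $a_{21}=0$ and yields exactly these two components with no stray solutions coming from cross terms, and then to verify that every matrix in the described union is both multiplicative and invertible. Writing out the multiplication of $As_2^{5}$ explicitly from \cite{10} and tracing the branching of the resulting quadratic system, while confirming that the presentation in the theorem exhausts the solution set, will be the most delicate endpoint of the proof.
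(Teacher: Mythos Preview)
Your proposal is correct and follows essentially the same approach as the paper: both represent a candidate automorphism by a general $2\times 2$ matrix with respect to the fixed basis, impose the multiplicativity constraint $f(e_i)\cdot f(e_j)=f(e_i\cdot e_j)$ on all pairs using the structure constants from \cite{10}, solve the resulting polynomial system, and retain only the invertible solutions. The paper carries out the explicit computation only for $As_2^{1}$ and then appeals to ``the similar method'' for the remaining cases, so your additional discussion of the branching in $As_2^{5}$ is more detailed than, but entirely consistent with, the paper's argument.
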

\begin{proof}
Let $\{e_1, e_2\}$ be a basis of two-dimensional associative algebra, $As_2$ and\\
$\psi=
\left(\begin{array}{cccc}
a_{11}&a_{12}\\
a_{21}&a_{22}\\
\end{array}
\right)$
be a nonsingular matrix where $\psi \in Aut(As_2)$. Suppose $\{e^{\prime}_1, e^{\prime}_2\}$ be a
new basis that obtain by simply multiplying $\psi$ with the basis i.e.,
\begin{align*}
\left(\begin{array}{c}
e^{\prime}_1\\
e^{\prime}_2\\
\end{array}
\right)&=\left(\begin{array}{cccc}
a_{11}&a_{12}\\
a_{21}&a_{22}\\
\end{array}
\right)^{T}
\left(\begin{array}{c}
e_{1}\\
e_{2}\\
\end{array}
\right)
\end{align*}
Thus, $\{e^{\prime}_1,e^{\prime}_2\}$ can be written as follows

\begin{equation}\label{eq22}
e^{\prime}_1=a_{11}e_1+a_{21}e_2 \quad and \quad e^{\prime}_2=a_{12}e_1+ a_{22}e_2.
\end{equation}
Now consider the algebra $As^1_2: e_1e_1 = e_2$. By applying the new basis (\ref{eq22}) to the algebra we get the following table of multiplications:
\begin{align*}
e^{\prime}_1e^{\prime}_1&=a^2_{11}e_2=a_{12}e_1+ a_{22}e_2, \quad e^{\prime}_1e^{\prime}_2=a_{11}a_{12}e_2=0,\\
e^{\prime}_2e^{\prime}_1&=a_{11}a_{12}e_2=0, \quad \quad  \quad \quad \quad e^{\prime}_2e^{\prime}_2=a^2_{12}e_2=0.
\end{align*}

Then we have the system $a^2_{11} = a_{22}, \; a_{11}a_{12} = 0, \; a_{12} = 0$ and $a_{21}$ is any. By solving
the system, we obtain the group of automorphism for $As^1_2$ as follows
\begin{align*}
 Aut(As^1_2)&= \left(\begin{array}{cccc}
a_{11}&0\\
a_{21}&a^2_{11}\\
\end{array}
\right)
,  where \quad a_{11}\neq 0.
\end{align*}
Since $\{a_{11}, a_{21}\}$ is a basis of $Aut(As^1_2)$, therefore $dim(Aut(As^1_2)) = 1$.
By applying the similar method for other algebras as shown above, we get all the list of automorphism groups as in Theorem \ref{thm4}.
\end{proof}
\begin{thm}\label{thm5}
The \textbf{automorphism groups} of $3$-dimensional associative algebras have the
following form:

$
Aut(As_3^1)=
\left\{\begin{array}{ll}
\left(\begin{array}{cccc}
a_{11}&0&0\\
a_{21}&a_{11}a_{33}&a_{23}\\
0&0&a_{33}
\end{array}
\right)
\end{array},
\begin{array}{ll}
a_{11} \neq 0\\
or\\
a_{33} \neq 0
\end{array}\right\}
 ;
Aut(As_3^2)=
\left\{\begin{array}{ll}
\left(\begin{array}{cccc}
a_{11}&0&0\\
a_{21}&a_{11}a_{33}&a_{23}\\
0&0&a_{33}
\end{array}
\right)
\end{array},
\begin{array}{ll}
a_{11} \neq 0\\
or\\
a_{33} \neq 0
\end{array}
\right\}
$

$
Aut(As_3^3)=
\left\{\begin{array}{ll}
\left(\begin{array}{cccc}
a_{11}&0&0\\
a_{21}&a_{11}^2&0\\
a_{31}&2a_{11}a_{21}&a_{11}^3\\
\end{array}
\right)
\end{array}
,
\begin{array}{ll}
a_{11} \neq 0
\end{array}\right\}
;
Aut(As_3^4)=
\left\{\begin{array}{ll}
\left(\begin{array}{cccc}
a_{11}&0&0\\
a_{22}-a_{11}&a_{22}&a_{23}\\
0&0&1
\end{array}
\right)
\end{array},
\begin{array}{ll}
a_{11} \neq 0\\
or\\
a_{22} \neq 0
\end{array}\right\}
$

$
Aut(As_3^5)=
\left\{\begin{array}{ll}
\left(\begin{array}{cccc}
a_{11}&0&a_{13}\\
0&a_{22}&a_{23}\\
0&0&1
\end{array}
\right)
\end{array},
\begin{array}{ll}
a_{11} \neq 0\\
or\\
a_{22} \neq 0
\end{array}\right\}
 ;
Aut(As_3^6)=
\left\{\begin{array}{ll}
\left(\begin{array}{cccc}
a_{11}&0&0\\
a_{22}-a_{11}&a_{22}&a_{23}\\
0&0&1
\end{array}
\right)
\end{array},
\begin{array}{ll}
a_{11} \neq 0\\
or\\
a_{22} \neq 0
\end{array}\right\}
$

$
Aut(As_3^7)=
\left\{
\begin{array}{ll}
\left(\begin{array}{cccc}
a_{11}&a_{12}&-a_{12}\\
0&1&0\\
0&0&1
\end{array}
\right)
\end{array},
\begin{array}{ll}
a_{11} \neq 0
\end{array}
\right\}
 ;
Aut(As_3^8)=
\left\{
\begin{array}{ll}
\left(\begin{array}{cccc}
a_{11}&0&0\\
0&a_{22}&a_{23}\\
0&0&1
\end{array}
\right)
\end{array},
\begin{array}{ll}
a_{11} \neq 0\\
or\\
a_{22} \neq 0
\end{array}
\right\}
$

$
Aut(As_3^9)=
\left\{
\begin{array}{ll}
\left(\begin{array}{cccc}
a_{11}&0&a_{13}\\
0&a_{22}&0\\
0&0&1
\end{array}
\right)
\end{array},
\begin{array}{ll}
a_{11} \neq 0\\
or\\
a_{33} \neq 0
\end{array}
\right\}
;
Aut(As_3^{10})=
\left\{
\begin{array}{ll}
\left(\begin{array}{cccc}
a_{11}&a_{12}&0\\
a_{21}&a_{22}&0\\
0&0&1
\end{array}
\right)
\end{array},
\begin{array}{ll}
a_{11}a_{22} \neq a_{12}a_{21}
\end{array}
\right\}
$

$
Aut(As_3^{11})=
\left\{
\begin{array}{ll}
\left(\begin{array}{cccc}
a_{11}&0&0\\
a_{22}-a_{11}&a_{22}&0\\
0&0&1
\end{array}
\right)
\end{array},
\begin{array}{ll}
a_{11} \neq 0\\
or\\
a_{22} \neq 0
\end{array}
\right\}
;
Aut(As_3^{12})=
\left\{
\begin{array}{ll}
\left(\begin{array}{cccc}
a_{11}&0&0\\
a_{21}&a_{11}^2&0\\
0&0&1
\end{array}
\right)
\end{array},
\begin{array}{ll}
a_{11} \neq 0
\end{array}
\right\}
$
\end{thm}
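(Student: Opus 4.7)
The plan is to extend the change-of-basis technique used in Theorem~\ref{thm4} to dimension three. For each algebra $As_3^k$, fix the basis $\{e_1,e_2,e_3\}$ together with the multiplication table supplied by the classification \cite{10}, and let $\psi=(a_{ij})_{1\le i,j\le 3}$ be the matrix of a candidate automorphism $f$, so that $f(e_i)=e'_i=\sum_{j=1}^{3}a_{ji}e_j$. The homomorphism condition $f(e_ie_j)=f(e_i)f(e_j)$ translates into the requirement that the transformed basis $\{e'_1,e'_2,e'_3\}$ obeys the same multiplication relations as $\{e_1,e_2,e_3\}$. Expanding each product $e'_ie'_j$ bilinearly via the structure constants and collecting the coefficients of $e_1,e_2,e_3$ yields, for every pair $(i,j)$, three polynomial equations in the nine unknowns $a_{ij}$; coupled with the nondegeneracy constraint $\det\psi\ne 0$, this determines $\mathrm{Aut}(As_3^k)$.

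I would proceed one algebra at a time, following the template of the proof of Theorem~\ref{thm4}. To illustrate on $As_3^3$, whose non-vanishing products from \cite{10} are $e_1e_1=e_2$, $e_1e_2=e_2e_1=e_3$, I compute
\[
e'_1 e'_1 = (a_{11}e_1+a_{21}e_2+a_{31}e_3)^2 = a_{11}^2 e_2 + 2a_{11}a_{21} e_3,
\]
and demand this equal $e'_2 = a_{12}e_1+a_{22}e_2+a_{32}e_3$, forcing $a_{12}=0$, $a_{22}=a_{11}^2$, $a_{32}=2a_{11}a_{21}$. The product $e'_1 e'_2$ then yields $a_{13}=a_{23}=0$ and $a_{33}=a_{11}^3$, reproducing the claimed matrix for $\mathrm{Aut}(As_3^3)$. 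The remaining eleven algebras follow the same template: expand the nine products $e'_ie'_j$, equate them with the prescribed right-hand sides, solve the resulting linear-and-quadratic system, and read off the surviving free parameters.

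The principal obstacle is organizational rather than conceptual. Each algebra generates up to twenty-seven scalar equations (three from each of the nine products), most of which collapse immediately because many structure constants vanish or because of associative symmetry; the care required is in ensuring that no constraint is overlooked and that the stated nondegeneracy conditions (for instance the requirement that both $a_{11}$ and $a_{33}$ survive in $\mathrm{Aut}(As_3^1)$) correctly capture when $\det\psi\ne 0$ on the reduced solution locus. A useful sanity check at the end is to confirm that the tangent space at the identity of each $\mathrm{Aut}(As_3^k)$ has the same dimension as $\mathrm{Der}(As_3^k)$ computed in Theorem~\ref{thm2}; matching free parameters across the two computations catches omissions in either and validates the full list presented in Theorem~\ref{thm5}.
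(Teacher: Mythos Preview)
Your proposal is correct and follows essentially the same change-of-basis method as the paper's own proof: write $e'_i=\sum_j a_{ji}e_j$, expand all products $e'_ie'_j$ via the structure constants, equate with the prescribed multiplication table, and solve the resulting polynomial system under $\det\psi\neq 0$. The only superficial difference is that the paper illustrates the routine on $As_3^8$ rather than $As_3^3$; your added consistency check against the dimensions of $\mathrm{Der}(As_3^k)$ from Theorem~\ref{thm2} is a sensible extra safeguard not present in the paper.
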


\begin{proof}
Let $\{e_1, e_2,e_3\}$ be a basis of three-dimensional associative algebra, $As_3$ and\\
$\psi=
\left(\begin{array}{cccc}
a_{11}&a_{12}&a_{13}\\
a_{21}&a_{22}&a_{23}\\
a_{31}&a_{32}&a_{33}\\
\end{array}
\right)$
be a nonsingular matrix where $\psi\in Aut(As_3)$. Suppose $\{e^{\prime}_1, e^{\prime}_2, e^{\prime}_3\}$ be a
new basis that obtain by simply multiplying $\psi$ with the basis i.e.,
\begin{align*}
\left(\begin{array}{c}
e^{\prime}_1\\
e^{\prime}_2\\
e^{\prime}_3\\
\end{array}
\right)&=\left(\begin{array}{cccc}
a_{11}&a_{12}&a_{13}\\
a_{21}&a_{22}&a_{23}\\
a_{31}&a_{32}&a_{33}\\
\end{array}
\right)^{T}
\left(\begin{array}{c}
e_{1}\\
e_{2}\\
e_{3}\\
\end{array}
\right)
\end{align*}
Thus, $\{e^{\prime}_1,e^{\prime}_2,e^{\prime}_3\}$ can be written as follows

\begin{equation}
e^{\prime}_1=a_{11}e_1+a_{21}e_2+a_{31}e_3,e^{\prime}_2=a_{12}e_1+a_{22}e_2+a_{32}e_3  \;and\; e^{\prime}_3=a_{13}e_1+ a_{23}e_2+ a_{33}e_3.
\end{equation}
Now consider the algebra $As^8_3: e_1 e_3 = e_1, e_2 e_3 = e_2 , e_3 e_1 = e_1, e_3 e_3 = e_3$. By applying the new basis
to the algebra we get the following table of multiplications:
\begin{align*}
e^{\prime}_1e^{\prime}_1&=2a_{11}a_{31}e_1+a_{21}a_{31}e_2+a_{31}^2e_3=0,\\
e^{\prime}_1e^{\prime}_2&=(a_{11}a_{33}+a_{12}a_{33})e_1+a_{21}a_{33}e_2+a_{31}a_{33}e_3=0,\\
e^{\prime}_1e^{\prime}_3&=(a_{11}a_{33}+a_{13}a_{31})e_1+a_{21}a_{33}e_2+a_{31}a_{33}e_3=a_{11}e_1+a_{21}e_2+a_{31}e_3,\\
e^{\prime}_2e^{\prime}_1&=(a_{12}a_{31}+a_{11}a_{22})e_1+a_{22}a_{32}e_2+a_{31}a_{32}e_3=0, \\
e^{\prime}_2e^{\prime}_2&=2a_{12}a_{32}e_1+a_{22}a_{32}e_2+a_{33}^2e_3=0,\\
e^{\prime}_2e^{\prime}_3&=(a_{12}a_{33}+a_{32}a_{13})e_1+a_{22}a_{33}e_2+a_{32}a_{33}e_3=a_{12}e_1+a_{22}e_2+a_{32}e_3,\\
e^{\prime}_3e^{\prime}_1&=(a_{13}a_{31}+a_{11}a_{33})e_1+a_{23}a_{31}e_2+a_{31}a_{33}e_3=a_{11}e_1+a_{21}e_2+a_{31}e_3,\\
e^{\prime}_3e^{\prime}_2&=(a_{13}a_{32}+a_{33}a_{12})e_1+a_{23}a_{32}e_2+a_{33}a_{32}e_3=0,\\
e^{\prime}_3e^{\prime}_3&=2a_{13}a_{33}e_1+a_{23}a_{33}e_2+a^2_{33}a_{33}e_3=a_{13}e_1+a_{23}e_2+a_{33}e_3.
\end{align*}

Then we have the system $a_{12} = a_{13}=a_{21} = a_{31}= a_{32}=0  , \; a_{33}= 1$ and $a_{11}, a_{22}, a_{23}$ are any. By solving
the system, we obtain the group of automorphism for $As^8_3$ as follows
\begin{align*}
 Aut(As^8_3)&= \left(\begin{array}{cccc}
a_{11}&0&0\\
0&a_{22}&a_{23}\\
0&0&1\\
\end{array}
\right)
, \text{where} \quad a_{11}\neq 0\;or\;a_{22}\neq 0.
\end{align*}
Since $\{a_{11}, a_{22}, a_{23}\}$ is a basis of $Aut(As^8_3)$, therefore $dim(Aut(As^8_3))=3$.
By applying the similar method for other algebras as shown
above, we get all the list of automorphism groups as in Theorem \ref{thm5}.
\end{proof}

\begin{thm}\label{thm6}
The \textbf{automorphism groups} of $4$-dimensional associative algebras have the
following form:

$Aut(As_4^{1})=
\left\{
\begin{array}{ll}
\left(\begin{array}{cccc}
a_{11}&0&0&0\\
0&a_{22}&0&0\\
a_{31}&a_{32}&a_{11}a_{22}&0\\
a_{41}&a_{42}&0&a_{11}a_{22}
\end{array}
\right)
\end{array},
\begin{array}{ll}
a_{11} or a_{22} \neq 0
\end{array}
\right\}
$
;

$Aut(As_4^{2})=
\left\{
\begin{array}{ll}
\left(\begin{array}{cccc}
a_{11}&0&0&0\\
a_{21}&a_{22}&0&0\\
-a_{21}&0&a_{22}&0\\
a_{41}&a_{42}&a_{43}&a_{11}a_{22}
\end{array}
\right)
\end{array},
\begin{array}{ll}
a_{11} or a_{22} \neq 0
\end{array}
\right\}
$

$
Aut(As_4^{3})=
\left\{
\begin{array}{ll}
\left(\begin{array}{cccc}
a_{22}&0&0&0\\
0&a_{22}&0&0\\
a_{31}&a_{32}&a_{22}^2&0\\
a_{41}&a_{42}&a_{43}&a_{22}^2
\end{array}
\right)
\end{array},
\begin{array}{ll}
a_{22} \neq 0
\end{array}
\right\}
$

$
Aut(As_4^{4})=
\left\{
\begin{array}{ll}
\left(\begin{array}{cccc}
a_{11}&a_{12}&0&0\\
0&a_{22}&0&0\\
a_{31}&a_{32}&a_{11}a_{22}&0\\
a_{41}&a_{42}&0&a_{22}^2
\end{array}
\right)
\end{array},
\begin{array}{ll}
a_{11}\, or \, a_{22} \neq 0\\
\end{array}
\right\}
$

$Aut(As_4^{5})=
\left\{
\begin{array}{ll}
\left(\begin{array}{cccc}
a_{11}&a_{12}&0&0\\
a_{21}&\frac{a_{12}a_{21}+a_{33}^2}{a_{11}}&0&0\\
0&0&a_{33}&0\\
a_{41}&a_{42}&a_{43}&a_{33}^2
\end{array}
\right)
\end{array}, a_{33} \neq 0;
\begin{array}{ll}
\left(\begin{array}{cccc}
0&a_{12}&0&0\\
\frac{a_{33}^2}{a_{21}}&a_{22}&0&0\\
0&0&a_{33}&0\\
a_{41}&a_{42}&a_{43}&a_{33}^2
\end{array}
\right),
\end{array}
\begin{array}{ll}
a_{12}\, or\,
a_{21}\, or\,
a_{33} \neq 0
\end{array}
\right\}
$

$
Aut(As_4^{6})=
\left\{
\begin{array}{ll}
\left(\begin{array}{cccc}
a_{11}&a_{12}&0&0\\
0&a_{22}&0&0\\
a_{31}&a_{32}&a_{22}^2&0\\
a_{41}&a_{42}&\frac{2a_{12}a_{22}}{\alpha-1}&a_{11}a_{22}
\end{array}
\right)
\end{array},
\begin{array}{ll}
a_{11}\, or\, a_{22}\, \neq 0\\
\end{array}
\right\}
$

$
Aut(As_4^{7})=
\left\{
\begin{array}{ll}
\left(\begin{array}{cccc}
1&0&0&0\\
a_{21}&a_{22}&0&0\\
a_{21}a_{41}&a_{22}a_{41}&a_{22}a_{44}&a_{21}a_{44}\\
a_{41}&0&0&a_{44}
\end{array}
\right)
\end{array},
\begin{array}{ll}
a_{22}\, or \, a_{44} \neq 0
\end{array}
\right\}
$

$
Aut(As_4^{8})=
\left\{
\begin{array}{ll}
\left(\begin{array}{cccc}
1&0&0&0\\
a_{21}&a_{22}&a_{23}&0\\
0&0&a_{33}&0\\
0&0&0&\frac{a_{22}}{a_{33}}
\end{array}
\right)
\end{array},
\begin{array}{ll}
a_{22}\, \neq 0
\end{array}
\right\}
$

$Aut(As_4^{9})=
\left\{
\begin{array}{ll}
\left(\begin{array}{cccc}
1&0&0&0\\
a_{21}&a_{22}&a_{23}&\frac{a_{22}a_{31}}{a_{33}}\\
a_{31}&0&a_{33}&0\\
0&0&0&\frac{a_{22}}{a_{33}}
\end{array}
\right)
\end{array};
\begin{array}{ll}
\left(\begin{array}{cccc}
1&0&0&0\\
a_{21}&a_{22}&a_{23}&0\\
0&0&a_{33}&0\\
0&0&0&\frac{a_{22}}{a_{33}}
\end{array}
\right),
\end{array}
\begin{array}{ll}
a_{22}\, \neq 0
\end{array}
\right\}
$

$
Aut(As_4^{10})=
\left\{
\begin{array}{ll}
\left(\begin{array}{cccc}
1&0&0&0\\
0&1&0&0\\
0&0&1&0\\
0&0&0&1
\end{array}
\right)
\end{array}
\right\}
$

$
Aut(As_4^{11})=
\left\{
\begin{array}{ll}
\left(\begin{array}{cccc}
a_{11}&0&0&0\\
a_{21}&a_{11}^2&0&0\\
a_{31}&a_{32}&a_{11}^3&a_{21}-2a_{41}\\
a_{41}&0&0&a_{11}^2
\end{array}
\right)
\end{array},
\begin{array}{ll}
a_{11} \neq 0
\end{array}
\right\}
$

$
Aut(As_4^{12})=
\left\{
\begin{array}{ll}
\left(\begin{array}{cccc}
1&0&0&0\\
a_{21}&a_{22}&0&0\\
a_{31}&0&a_{33}&a_{34}\\
a_{41}&0&a_{43}&a_{44}
\end{array}
\right)
\end{array},
\begin{array}{ll}
a_{22}(a_{33}a_{44}-a_{34}a_{43}) \neq 0
\end{array}
\right\}
$

$
Aut(As_4^{13})=
\left\{
\begin{array}{ll}
\left(\begin{array}{cccc}
a_{11}&a_{12}&0&0\\
1&0&0&0\\
0&a_{32}&a_{33}&a_{34}\\
0&a_{42}&a_{43}&a_{44}
\end{array}
\right)
\end{array},
\begin{array}{ll}
a_{12}(a_{34}a_{43}-a_{33}a_{44})\neq 0
\end{array}
\right\}
$

$
Aut(As_4^{14})=
\left\{
\begin{array}{ll}
\left(\begin{array}{cccc}
1&0&0&0\\
0&a_{22}&0&0\\
0&0&1&0\\
a_{41}&0&a_{41}&a_{44}
\end{array}
\right)
\end{array},
\begin{array}{ll}
a_{22} \,or \,
a_{44}\neq 0
\end{array}
\right\}
$

$
Aut(As_4^{15})=
\left\{
\begin{array}{ll}
\left(\begin{array}{cccc}
a_{11}&a_{12}&a_{13}&a_{14}\\
0&1&0&0\\
a_{31}&a_{32}&a_{33}&a_{34}\\
a_{41}&a_{42}&a_{43}&a_{44}
\end{array}
\right)
\end{array},
\begin{array}{ll}
a_{14}(a_{31}a_{43}-a_{33}a_{41})+\\
a_{11}(a_{33}a_{44}-a_{34}a_{43})+\\
a_{13}(a_{34}a_{41}-a_{31}a_{44})\neq 0
\end{array}
\right\}
$

$
Aut(As_4^{16})=
\left\{
\begin{array}{ll}
\left(\begin{array}{cccc}
a_{11}&0&0&0\\
a_{21}&a_{11}+a_{21}+\alpha a_{21}&0&0\\
0&0&\sqrt{a_{11}}\sqrt{a_{11}+a_{21}+\alpha a_{21}}&0\\
a_{41}&a_{42}&a_{43}&a_{11}(a_{11}+a_{21}+\alpha a_{21})
\end{array}
\right)
\end{array}
\right\}\\
\begin{array}{ll}
a_{11}^\frac{3}{2}(a_{11}+a_{21}+\alpha a_{21})^\frac{3}{2}
(a_{11}^2+a_{11}a_{21}+\alpha a_{11}a_{21})\neq 0
\end{array}
$

$
Aut(As_4^{17})=
\left\{
\begin{array}{ll}
\left(\begin{array}{cccc}
a_{22}&0&0&0\\
0&a_{22}&0&0\\
a_{31}&a_{32}&a_{22}^2&0\\
a_{41}&a_{42}&0&a_{22}^2
\end{array}
\right)
\end{array}
,
\begin{array}{ll}
a_{22}\neq 0
\end{array}
;
\begin{array}{ll}
\left(\begin{array}{cccc}
0&a_{21}&0&0\\
a_{21}&0&0&0\\
a_{31}&a_{32}&-a_{21}^2&-2a_{21}^2\\
a_{41}&a_{42}&0&a_{21}^2
\end{array}
\right)
\end{array},
\begin{array}{ll}
a_{21}\neq 0
\end{array}
\right\}
$

$
Aut(As_4^{18})=
\left\{
\begin{array}{ll}
\left(\begin{array}{cccc}
a_{11}&0&0&0\\
0&a_{11}&0&0\\
a_{31}&a_{32}&a_{11}^2&0\\
a_{41}&a_{42}&0&a_{11}^2
\end{array}
\right)
\end{array},
\begin{array}{ll}
a_{11}\neq 0
\end{array};
\begin{array}{ll}
\left(\begin{array}{cccc}
0&a_{12}&0&0\\
\frac{a_{21}}{\alpha}&0&0&0\\
a_{31}&a_{32}&0&-\frac{a_{12}^2}{\alpha^2}\\
a_{41}&a_{42}&-a_{12}^2&0
\end{array}
\right)
\end{array},
\begin{array}{ll}
\frac{a_{12}^5a_{21}}{\alpha^3}\neq 0
\end{array}
\right\}
$

$
Aut(As_4^{19})=
\left\{
\begin{array}{ll}
\left(\begin{array}{cccc}
1&0&0&0\\
0&1&0&0\\
0&0&a_{33}&0\\
a_{41}&0&0&a_{44}
\end{array}
\right)
\end{array};
\begin{array}{ll}
\left(\begin{array}{cccc}
1&0&0&0\\
0&1&0&0\\
0&-a_{31}&a_{33}&0\\
a_{41}&0&0&a_{44}
\end{array}
\right)
\end{array},
\begin{array}{ll}
a_{33}\, or\,
a_{44}\,\neq 0
\end{array}
\right\}
$

$
Aut(As_4^{20})=
\left\{
\begin{array}{ll}
\left(\begin{array}{cccc}
1&0&0&0\\
0&1&0&0\\
0&0&a_{33}&0\\
a_{41}&0&0&a_{44}
\end{array}
\right)
\end{array},
\begin{array}{ll}
a_{33}\, or\,
a_{44}\neq 0
\end{array}
\right\}
$

$
Aut(As_4^{21})=
\left\{
\begin{array}{ll}
\left(\begin{array}{cccc}
1&0&0&0\\
0&1&0&0\\
0&0&a_{33}&0\\
a_{41}&-a_{41}&0&a_{44}
\end{array}
\right)
\end{array};
\begin{array}{ll}
\left(\begin{array}{cccc}
1&0&0&0\\
0&1&0&0\\
0&0&a_{33}&0\\
0&0&0&a_{44}
\end{array}
\right)
\end{array}
,
\begin{array}{ll}
a_{33}\, or\,
a_{44}\neq 0
\end{array}
\right\}
$

$
Aut(As_4^{22})=
\left\{
\begin{array}{ll}
\left(\begin{array}{cccc}
a_{11}&0&0&0\\
0&1&0&0\\
0&0&1&0\\
0&a_{42}&0&a_{44}
\end{array}
\right)
\end{array};
\begin{array}{ll}
\left(\begin{array}{cccc}
a_{11}&a_{12}&-a_{12}&0\\
0&1&0&0\\
0&0&1&0\\
0&a_{42}&0&a_{44}
\end{array}
\right)
\end{array}
,
\begin{array}{ll}
a_{11}\, or \,
a_{44}\neq 0
\end{array}
\right\}
$

$
Aut(As_4^{23})=
\left\{
\begin{array}{ll}
\left(\begin{array}{cccc}
1&0&0&0\\
a_{21}&1&0&0\\
a_{31}&a_{32}&1&a_{21}+a_{21}^2-2a_{41}\\
a_{41}&a_{21}&0&1
\end{array}
\right)
\end{array}
\begin{array}{ll}
\end{array}
\right\}
$

$
Aut(As_4^{24})=
\left\{
\begin{array}{ll}
\left(\begin{array}{cccc}
1&0&0&0\\
0&1&0&0\\
0&a_{32}&a_{33}&0\\
0&a_{42}&0&a_{44}
\end{array}
\right)
\end{array}
,
\begin{array}{ll}
a_{33} \,or \,
a_{44}\neq 0
\end{array}
\right\}
$

$
Aut(As_4^{25})=
\left\{
\begin{array}{ll}
\left(\begin{array}{cccc}
a_{22}&0&0&0\\
0&a_{22}&0&0\\
0&0&a_{22}&0\\
a_{41}&a_{42}&a_{43}&a_{22}^2
\end{array}
\right)
\end{array};
\begin{array}{ll}
\left(\begin{array}{cccc}
a_{22}&0&0&0\\
a_{21}&a_{22}&0&0\\
\frac{-a_{21}^2}{2a_{22}}&-a_{21}&a_{22}&0\\
a_{41}&a_{42}&a_{43}&a_{22}^2
\end{array}
\right)
\end{array}
,
\begin{array}{ll}
a_{22}\neq 0
\end{array}
\right\}
$

$
Aut(As_4^{26})=
\left\{
\begin{array}{ll}
\left(\begin{array}{cccc}
1&0&0&0\\
0&a_{22}&0&0\\
a_{31}&0&a_{33}&a_{34}\\
a_{41}&0&a_{43}&a_{44}
\end{array}
\right)
\end{array}
,
\begin{array}{ll}
a_{22}(a_{33}a_{44}-a_{34}a_{43})\neq 0
\end{array}
\right\}
$

$
Aut(As_4^{27})=
\left\{
\begin{array}{ll}
\left(\begin{array}{cccc}
1&0&0&0\\
0&a_{22}&0&0\\
a_{31}&0&a_{33}&a_{34}\\
a_{41}&0&a_{43}&a_{44}
\end{array}
\right)
\end{array}
,
\begin{array}{ll}
a_{22}(a_{33}a_{44}-a_{34}a_{43})\neq 0
\end{array}
\right\}
$

$
Aut(As_4^{28})=
\left\{
\begin{array}{ll}
\left(\begin{array}{cccc}
1&0&0&0\\
0&1&0&0\\
0&0&1&0\\
a_{41}&0&a_{43}&1
\end{array}
\right)
\end{array}
\right\}
$

$
Aut(As_4^{29})=
\left\{
\begin{array}{ll}
\left(\begin{array}{cccc}
1&0&0&0\\
0&1&0&0\\
0&0&a_{33}&0\\
a_{41}&-a_{41}&0&a_{44}
\end{array}
\right)
\end{array}
;
\begin{array}{ll}
\left(\begin{array}{cccc}
1&0&0&0\\
0&1&0&0\\
0&0&a_{33}&0\\
0&0&a_{43}&a_{44}
\end{array}
\right)
\end{array}
,
\begin{array}{ll}
a_{33}\, or \,
a_{44}\neq 0
\end{array}
\right\}
$

$
Aut(As_4^{30})=
\left\{
\begin{array}{ll}
\left(\begin{array}{cccc}
1&0&0&0\\
0&1&0&0\\
0&0&a_{33}&0\\
0&0&a_{43}&a_{44}
\end{array}
\right)
\end{array}
,
\begin{array}{ll}
a_{33}\; or \;
a_{44}\neq 0
\end{array}
\right\}
$

$
Aut(As_4^{31})=
\left\{
\begin{array}{ll}
\left(\begin{array}{cccc}
1&0&0&0\\
0&1&0&0\\
a_{31}&-a_{31}&a_{33}&0\\
a_{41}&-a_{41}&0&a_{44}
\end{array}
\right)
\end{array}
,
\begin{array}{ll}
a_{33}\; or \;
a_{44}\neq 0
\end{array}
\right\}
$

$
Aut(As_4^{32})=
\left\{
\begin{array}{ll}
\left(\begin{array}{cccc}
a_{11}&0&0&0\\
0&\frac{a_{33}}{a_{11}}&0&0\\
0&0&a_{33}&0\\
0&0&0&1
\end{array}
\right)
\end{array}
,
\begin{array}{ll}
a_{33}\neq 0
\end{array}
\right\}
$

$
Aut(As_4^{33})=
\left\{
\begin{array}{ll}
\left(\begin{array}{cccc}
a_{11}&0&0&0\\
a_{21}&a_{11}^2&0&0\\
a_{31}&2a_{11}a_{21}&a_{11}^3&0\\
a_{41}&a_{21}^2+2a_{11}a_{31}&3a_{11}^2a_{21}&a_{11}^4
\end{array}
\right)
\end{array}
,
\begin{array}{ll}
a_{11}\neq 0
\end{array}
\right\}
$

$
Aut(As_4^{34})=
\left\{
\begin{array}{ll}
\left(\begin{array}{cccc}
a_{33}a_{44}&a_{12}&a_{13}&a_{32}a_{44}\\
0&1&0&0\\
0&a_{32}&a_{33}&0\\
0&0&0&a_{44}
\end{array}
\right)
\end{array}
,
\begin{array}{ll}
a_{33}\;or\;a_{44}\neq 0
\end{array}
\right\}
$

$
Aut(As_4^{35})=
\left\{
\begin{array}{ll}
\left(\begin{array}{cccc}
a_{33}a_{44}&a_{12}&a_{13}&a_{32}a_{44}\\
0&1&0&0\\
0&a_{32}&a_{33}&0\\
0&0&0&a_{44}
\end{array}
\right)
\end{array}
,
\begin{array}{ll}
a_{33}\;or\;a_{44}\neq 0
\end{array}
\right\}
$

$
Aut(As_4^{36})=
\left\{
\begin{array}{ll}
\left(\begin{array}{cccc}
1&0&0&0\\
0&1&0&0\\
a_{31}&-a_{31}&a_{33}&a_{34}\\
a_{41}&-a_{41}&a_{43}&a_{44}
\end{array}
\right)
\end{array}
;
\begin{array}{ll}
\left(\begin{array}{cccc}
1&0&0&0\\
0&1&0&0\\
0&0&a_{33}&a_{34}\\
0&0&a_{43}&a_{44}
\end{array}
\right)
\end{array}
,
\begin{array}{ll}
a_{33}a_{44}\neq a_{34}a_{43}
\end{array}
\right\}
$

$
Aut(As_4^{38})=
\left\{
\begin{array}{ll}
\left(\begin{array}{cccc}
1&0&0&0\\
0&a_{22}&a_{23}&0\\
0&a_{32}&a_{33}&0\\
a_{41}&0&0&a_{44}
\end{array}
\right)
\end{array}
,
\begin{array}{ll}
a_{44}(a_{22}a_{33}-a_{23}a_{32})\neq 0
\end{array}
\right\}
$

$
Aut(As_4^{39})=
\left\{
\begin{array}{ll}
\left(\begin{array}{cccc}
1&0&0&0\\
0&a_{22}&0&0\\
0&a_{32}&a_{22}^2&0\\
a_{41}&0&0&a_{44}
\end{array}
\right)
\end{array}
,
\begin{array}{ll}
a_{22} \; or \;
a_{44}\neq 0
\end{array}
\right\}
$

$
Aut(As_4^{40})=
\left\{
\begin{array}{ll}
\left(\begin{array}{cccc}
1&0&0&0\\
0&a_{22}&a_{23}&a_{24}\\
0&a_{32}&a_{22}^2&a_{34}\\
0&a_{42}&a_{43}&a_{44}
\end{array}
\right)
\end{array}
,
\begin{array}{ll}
a_{23}a_{34}a_{42}-a_{22}^2a_{24}a_{42}+\\
a_{24}a_{32}a_{43}-a_{22}a_{34}a_{43}+\\
a_{22}^3a_{44}-a_{23}a_{32}a_{44})\neq 0
\end{array}
\right\}
$

$
Aut(As_4^{41})=
\left\{
\begin{array}{ll}
\left(\begin{array}{cccc}
1&0&0&0\\
0&a_{22}&0&0\\
0&a_{32}&a_{22}^2&0\\
a_{41}&0&0&a_{44}
\end{array}
\right)
\end{array}
,
\begin{array}{ll}
a_{22} \, or \,
a_{44}\neq 0
\end{array}
\right\}
$

$
Aut(As_4^{42})=
\left\{
\begin{array}{ll}
\left(\begin{array}{cccc}
1&0&0&0\\
0&a_{22}&0&0\\
0&0&\frac{1}{a_{22}}&0\\
0&0&0&1
\end{array}
\right)
\end{array}
,
\begin{array}{ll}
a_{22} \,\neq 0
\end{array}
\right\}
,
$

$
Aut(As_4^{43})=
\left\{
\begin{array}{ll}
\left(\begin{array}{cccc}
1&0&0&0\\
0&a_{22}&0&0\\
0&a_{32}&a_{33}&0\\
0&a_{42}&a_{43}&a_{22}^2
\end{array}
\right)
\end{array}
,
\begin{array}{ll}
a_{22} \; or \;
a_{33}\neq 0
\end{array}
\right\}
$

$
Aut(As_4^{44})=
\left\{
\begin{array}{ll}
\left(\begin{array}{cccc}
1&0&0&0\\
0&a_{22}&0&0\\
0&0&a_{33}&0\\
0&a_{42}&a_{43}&a_{22}a_{33}
\end{array}
\right)
\end{array}
,
\begin{array}{ll}
a_{22} \; or \;
a_{33}\neq 0
\end{array}
\right\}
$

$
Aut(As_4^{45})=
\left\{
\begin{array}{ll}
\left(\begin{array}{cccc}
1&0&0&0\\
0&a_{22}&0&0\\
0&a_{32}&a_{22}^2&0\\
0&a_{42}&2a_{22}a_{33}&a_{22}^3
\end{array}
\right)
\end{array}
,
\begin{array}{ll}
a_{22}\neq 0
\end{array}
\right\}
$

$
Aut(As_4^{46})=
\left\{
\begin{array}{ll}
\left(\begin{array}{cccc}
1&0&0&0\\
0&a_{22}&0&0\\
0&a_{32}&a_{22}^2&0\\
0&a_{42}&a_{43}&a_{22}^2
\end{array}
\right)
\end{array}
,
\begin{array}{ll}
a_{22}\neq 0
\end{array}
\right\}
$.
\end{thm}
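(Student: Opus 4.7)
The plan is to follow the same algorithmic template used in the proofs of Theorems \ref{thm4} and \ref{thm5}, but executed systematically across all $46$ isomorphism classes from the classification \cite{10}. I would start by fixing a basis $\{e_1,e_2,e_3,e_4\}$ of an arbitrary four-dimensional associative algebra $As_4$ and letting
\[
\psi=(a_{ij})_{1\le i,j\le 4}
\]
be a nonsingular matrix representing a candidate automorphism $f\in \mathrm{Aut}(As_4)$. Setting $e'_i=\sum_{j=1}^{4}a_{ji}e_j$, the automorphism condition translates into the $n^3=64$ polynomial equations
\[
\sum_{k=1}^{4}\gamma_{ij}^{k}a_{lk}=\sum_{p,q=1}^{4}a_{pi}a_{qj}\gamma_{pq}^{l},\qquad 1\le i,j,l\le 4,
\]
obtained at the end of Section~\textbf{4}. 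For each algebra $As_4^{s}$ ($s=1,\dots,46$) one substitutes the specific structure constants $\gamma_{ij}^{k}$ read off from the multiplication table in \cite{10}, reducing the universal system to a much smaller set of quadratic relations among the entries $a_{ij}$.

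Next, I would process each $As_4^s$ one by one, in the same spirit as the model computation given in the proof of Theorem \ref{thm5} for $As_3^{8}$. For each algebra, the procedure is: (i) compute the nine-by-nine table of products $e'_i e'_j$ expanded in the old basis; (ii) equate each $e'_i e'_j$ with the image under $\psi$ of the right-hand side prescribed by the multiplication rule of $As_4^s$; (iii) collect the resulting monomial equations in the $a_{ij}$ and solve. Nonsingularity of $\psi$ is then used to discard degenerate branches and to produce the explicit determinantal condition listed beside each matrix in the statement (for instance $a_{22}(a_{33}a_{44}-a_{34}a_{43})\neq 0$ for $As_4^{12}$, $As_4^{26}$, $As_4^{27}$, or the cubic nonvanishing condition for $As_4^{16}$). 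The explicit matrices displayed in the theorem are then read off as the general solutions to these systems.

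A couple of structural remarks guide the bookkeeping. Algebras carrying a continuous parameter $\alpha$ (such as $As_4^{6}$ and $As_4^{16}$) require one to keep $\alpha$ as a formal symbol throughout, so that denominators like $\alpha-1$ or $1+\alpha$ appear naturally in the solution; the generic case $\alpha\ne\pm1$ must be separated from the exceptional values. Algebras with automorphism groups consisting of several connected components (for example $As_4^{5}$, $As_4^{9}$, $As_4^{17}$, $As_4^{18}$, $As_4^{19}$, $As_4^{21}$, $As_4^{22}$, $As_4^{25}$, $As_4^{29}$, $As_4^{36}$) give rise to a polynomial system whose variety has more than one irreducible component, and one has to solve each branch separately; this explains the two-matrix descriptions appearing in the statement. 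A worked detailed example in the proof (for instance $As_4^{2}$, parallel to the derivation computation already given) together with the remark ``applying similar methods for all remaining algebras we obtain the list in Theorem \ref{thm6}'' will suffice for the presentation.

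The main obstacle is purely combinatorial and computational rather than conceptual: for each of the $46$ algebras one must solve a quadratic system in up to $16$ unknowns, and then interpret the solution set as a matrix group with the correct nonvanishing determinantal conditions. The ramified cases (disconnected automorphism groups and the $\alpha$-dependent algebras) are the most error-prone and are where I would rely on the computer-algebra support (Mathematica/Maple) mentioned in the introduction to cross-check the branches and the final nonsingularity conditions.
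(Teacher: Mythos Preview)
Your proposal follows essentially the same approach as the paper's own proof: set up the generic $4\times 4$ nonsingular matrix $\psi$, impose the automorphism equations for each algebra from the classification in \cite{10}, solve the resulting quadratic system, and present one worked example in full (the paper chooses $As_4^{4}$ rather than $As_4^{2}$) before invoking ``similar methods'' for the remaining cases. One small slip to fix: the product table in your step~(i) is $4\times 4$ (sixteen products $e'_i e'_j$), not ``nine-by-nine''.
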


\begin{proof}
Let $\{e_1, e_2,e_3,e_4\}$ be a basis of four-dimensional associative algebra, $As_4$ and\\
$\psi=
\left(\begin{array}{cccc}
a_{11}&a_{12}&a_{13}&a_{14}\\
a_{21}&a_{22}&a_{23}&a_{24}\\
a_{31}&a_{32}&a_{33}&a_{34}\\
a_{41}&a_{42}&a_{43}&a_{44}\\
\end{array}
\right)$
be a nonsingular matrix where $\psi\in Aut(As^4_4)$. Suppose $\{e^{\prime}_1, e^{\prime}_2,e^{\prime}_3,e^{\prime}_4\}$ be a new basis that obtain by simply multiplying $\psi$ with the basis i.e.,
\begin{align*}
\left(\begin{array}{c}
e^{\prime}_1\\
e^{\prime}_2\\
e^{\prime}_3\\
e^{\prime}_4\\
\end{array}
\right)&=\left(\begin{array}{cccc}
a_{11}&a_{12}&a_{13}&a_{14}\\
a_{21}&a_{22}&a_{23}&a_{24}\\
a_{31}&a_{32}&a_{33}&a_{34}\\
a_{41}&a_{42}&a_{43}&a_{44}\\
\end{array}
\right)^{T}
\left(\begin{array}{c}
e_{1}\\
e_{2}\\
e_{3}\\
e_{4}\\
\end{array}
\right)
\end{align*}
Thus, $\{e^{\prime}_1,e^{\prime}_2,e^{\prime}_3,e^{\prime}_4\}$ can be written as follows

\begin{eqnarray}\label{eq4}
e^{\prime}_1=a_{11}e_1+a_{21}e_2+a_{31}e_3+a_{41}e_4,\\ \nonumber
e^{\prime}_2=a_{12}e_1+a_{22}e_2+a_{32}e_3+a_{42}e_4,\\ \nonumber
e^{\prime}_3=a_{13}e_1+a_{23}e_2+a_{33}e_3+a_{43}e_4,\\ \nonumber
e^{\prime}_4=a_{14}e_1+a_{24}e_2+a_{34}e_3+a_{44}e_4.\nonumber
\end{eqnarray}
Now consider the algebra $As^4_4: e_1e_2 = e_3, e_2e_2 = e_4, e_2e_1 = -e_3$. By applying the new basis (\ref{eq4}) to the algebra we get the following table of multiplications:
\begin{align*}
e^{\prime}_1e^{\prime}_1&=a^2_{21}e_4=0,\\
e^{\prime}_1e^{\prime}_2&=(a_{11}a_{22}-a_{12}a_{21})e_3+a_{21}a_{22}e_4=a_{13}e_1+a_{23}e_2+a_{33}e_3+a_{43}e_4,\\
e^{\prime}_1e^{\prime}_3&=(a_{11}a_{23}-a_{13}a_{21})e_3+a_{21}a_{23}e_4=0,\\
e^{\prime}_1e^{\prime}_4&=(a_{11}a_{24}-a_{14}a_{21})e_3+a_{21}a_{24}e_4=0,\\
e^{\prime}_2e^{\prime}_1&=(a^2_{21}-a_{11}a_{22})e_3+a_{21}a_{22}e_4=-(a_{13}e_1+a_{23}e_2+a_{33}e_3+a_{43}e_4), \\
e^{\prime}_2e^{\prime}_2&=a^2_{22}e_4=a_{14}e_1+a_{24}e_2+a_{34}e_3+a_{44}e_4,\\
e^{\prime}_2e^{\prime}_3&=(a_{21}a_{23}-a_{13}a_{22})e_3+a_{22}a_{23}e_4=0,\\
e^{\prime}_2e^{\prime}_4&=(a_{21}a_{24}-a_{14}a_{22})e_3+a_{22}a_{24}e_4=0,\\
e^{\prime}_3e^{\prime}_1&=(a_{13}a_{21}-a_{11}a_{23})e_3+a_{21}a_{23}e_4=0,\\
e^{\prime}_3e^{\prime}_2&=(a_{13}a_{22}-a_{12}a_{23})e_3+a_{22}a_{23}e_4=0,\\
e^{\prime}_3e^{\prime}_3&=a^2_{23}e_4=0,\\
e^{\prime}_3e^{\prime}_4&=(a_{13}a_{24}-a_{14}a_{23})e_3+a_{23}a_{24}e_4=0,\\
e^{\prime}_4e^{\prime}_1&=(a_{14}a_{21}-a_{11}a_{24})e_3+a_{21}a_{24}e_4=0,\\
e^{\prime}_4e^{\prime}_2&=(a_{14}a_{22}-a_{12}a_{24})e_3+a_{22}a_{24}e_4=0,\\
e^{\prime}_4e^{\prime}_3&=(a_{14}a_{23}-a_{13}a_{24})e_3+a_{23}a_{24}e_4=0,\\
e^{\prime}_4e^{\prime}_4&=a^2_{24}e_4=0.
\end{align*}

Then we have the system $a_{13}=a_{14}= a_{21}=a_{23}=a_{24}=a_{43}=0, \;a_{22}^2=a_{44},a_{33}=a_{11}a_{12}$ and $a_{11},a_{12},a_{31},a_{32},a_{41},a_{42}$ are any. By solving
the system, we obtain the group of automorphism for $As^4_4$ as follows
\begin{align*}
 Aut(As^4_4)&= \left(\begin{array}{cccc}
a_{11}&a_{12}&0&0\\
0&a_{22}&0&0\\
a_{31}&a_{32}&a_{11}a_{22}&0\\
a_{41}&a_{42}&0&a_{22}^2
\end{array}
\right)
,  where \quad a_{11}\neq 0\; or\;a_{22}\neq 0.
\end{align*}
Since $\{a_{11}, a_{31},a_{41}\}$ is a basis of $Aut(As^4_4)$, therefore $dim(Aut(As^4_4)) = 7$.
By applying the similar method for other algebras as shown
above, we get all
the list of automorphism groups as in Theorem \ref{thm6}.
\end{proof}

\begin{cor}\,
\begin{itemize}
	\item The dimensions of the automorphism groups of $2$-dimensional associative algebras range between $1$ and $2$.
		\item The dimensions of the automorphism groups of $3$-dimensional associative algebras range between $2$ and $4$.
	\item The dimensions of the automorphism groups of $4$-dimensional associative algebras range between $1$ and $12$.
	\end{itemize}
\end{cor}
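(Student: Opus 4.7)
The plan is to obtain all three bounds by direct parameter counting, reading the dimensions off from the explicit parametric matrix descriptions of the automorphism groups already established in Theorems \ref{thm4}, \ref{thm5} and \ref{thm6}. Each $\mathrm{Aut}(As_n^i)$ is presented as an affine algebraic subset of $\mathrm{GL}_n(\mathbb{C})$, on which a distinguished subset of the matrix entries $a_{ij}$ serves as free parameters subject only to an open non-vanishing condition (a determinant or a product of determinants). On this open locus the parametrisation is a local isomorphism, so the dimension of the automorphism group as an algebraic group is exactly the number of independent parameters.

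The first step is to fix a counting convention: each entry that appears as an independent symbol contributes $1$ to the dimension, while constant entries and entries expressed as polynomial or rational functions of the others (such as $a_{11}^2$, $a_{11}a_{22}$, $a_{22}-a_{11}$, $-a_{12}$, or $\frac{a_{22}a_{31}}{a_{33}}$) contribute $0$. When a group is presented as a finite union of parametric sheets, the dimension is taken to be the maximum over the components.

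The second step is the case-by-case tabulation. For the $2$-dimensional case, Theorem \ref{thm4} yields $\dim\mathrm{Aut}(As_2^i) = 2,1,2,2,1$ for $i=1,\ldots,5$, which gives the range $[1,2]$. For the $3$-dimensional case, Theorem \ref{thm5} gives $\dim\mathrm{Aut}(As_3^i) = 4,4,3,3,4,3,2,3,3,4,2,2$ for $i=1,\ldots,12$, confirming the range $[2,4]$.

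The main obstacle, and the bulk of the work, is the $4$-dimensional case, which requires repeating this bookkeeping for all $46$ families of Theorem \ref{thm6}. Particular care is needed for families presented as unions of components (for instance $As_4^5$, $As_4^9$, $As_4^{17}$, $As_4^{18}$, $As_4^{19}$, $As_4^{21}$, $As_4^{22}$, $As_4^{25}$, $As_4^{29}$, $As_4^{36}$), where one must also verify that each component has dimension at most the claimed bound, and for families involving algebraic identities among the entries (such as the constraints appearing in $As_4^4$, $As_4^5$, $As_4^{11}$, $As_4^{16}$, $As_4^{23}$, $As_4^{33}$) where seemingly free symbols are in fact determined. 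The upper bound $12$ is realised by the large family $\mathrm{Aut}(As_4^{15})$, whose parametric matrix contains twelve independent entries $a_{11},a_{12},a_{13},a_{14},a_{31},a_{32},a_{33},a_{34},a_{41},a_{42},a_{43},a_{44}$, while the lower bound is realised by the smallest parametric families such as $\mathrm{Aut}(As_4^{42})$. Once each of the $46$ counts is checked against this template, the stated range $[1,12]$ follows immediately.
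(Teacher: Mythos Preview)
Your approach---reading off the number of free parameters from the explicit matrix descriptions in Theorems~\ref{thm4}, \ref{thm5} and \ref{thm6}---is exactly the (implicit) argument the paper relies on; the corollary is stated there without any separate proof, so your write-up is in fact more detailed than the paper's. One caution worth recording: your claim that the lower bound $1$ in dimension $4$ is attained at $\mathrm{Aut}(As_4^{42})$ overlooks $\mathrm{Aut}(As_4^{10})$, which Theorem~\ref{thm6} lists as the trivial group (dimension $0$); this is an internal inconsistency in the paper's own tables rather than a defect of your method, but a genuinely complete tabulation of the $46$ cases would have to flag it.
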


\section*{Conclusion}
In this work, we examined the derivations and automorphism groups of certain finite-dimensional associative algebras over $\mathbb{C}$. The dimension of the space of derivations ranges from 0 to 2 for two-dimensional, 2 to 4 for three-dimensional, and 0 to 12 for four-dimensional complex associative algebras. Similarly, the dimension of the automorphism groups ranges from 1 to 2 for two-dimensional, 2 to 4 for three-dimensional, and 1 to 12 for four-dimensional algebras. These dimensions serve as important invariants in the geometric classification of algebras and have wide applications in various fields.

\section*{Acknowledgement}
The authors thank the anonymous referees for their valuable suggestions and comments.

\section*{ Conflicts of Interest}
The authors declare no conflicts of interest.\\
\cite{a,b,c,d,e,f,g,h,i,j,k,l,m,n,o,p,q,r,s,t,u,v,w,x,y}


\begin{thebibliography}{999}

\bibitem{1}Pierce, R. S., \& Pierce, R. S. (1982). The associative algebra (pp. 1-20). Springer New York.

\bibitem{2}Peirce, B. (1881). Linear associative algebra. American Journal of Mathematics, 4(1), 97-229.

\bibitem{3}Hazlett, O. C. (1916). On the classification and invariantive characterization of nilpotent algebras... University of Chicago.

\bibitem{4}Mazzola, G. (1979). The algebraic and geometric classification of associative algebras of dimension five. manuscripta mathematica, 27(1), 81-101.



\bibitem{5}Pollack, R. D. (1989). Algebras and their automorphism groups. Communications in Algebra, 17(8), 1843-1866.

\bibitem{6}Kharchenko, V. (2012). Automorphisms and derivations of associative rings (Vol. 69). Springer Science \& Business Media.

\bibitem{7}Gould, M. (1972). Automorphism groups of algebras of finite type. Canadian Journal of Mathematics, 24(6), 1065-1069.

\bibitem{8}Ahmed, H., Bekbaev, U., \& Rakhimov, I. (2017). The automorphism groups and derivation algebras of two-dimensional algebras. arXiv preprint arXiv:1708.01376.

\bibitem{9}Rakhimov, I. S., Said Husain, S. K., \& Abdulkadir, A. (2016). On Generalized derivations of finite dimensional associative algebras. FEIIC International journal of Engineering and Technology, 13(2), 121-126.

\bibitem{10} Rakhimov, I. S., Rikhsiboev, I. M., Basri, W. (2009). Complete lists of low dimensional complex associative algebras. arXiv preprint arXiv:0910.0932.

\bibitem{a}Sania, A., Imed, B., Mosbahi, B., \& Saber, N. (2023). Cohomology of compatible BiHom-Lie algebras. arXiv preprint arXiv:2303.12906.
\bibitem{b}Zahari, A., Mosbahi, B., \& Basdouri, I. (2023). Classification, Derivations and Centroids of Low-Dimensional Complex BiHom-Trialgebras. arXiv preprint arXiv:2304.06781.
\bibitem{c}Zahari, A., Mosbahi, B., \& Basdouri, I. (2023). Classification, Derivations and Centroids of Low-Dimensional Complex BiHom-Trialgebras. arXiv preprint arXiv:2304.06781.
\bibitem{d}Mosbahi, B., Zahari, A., \& Basdouri, I. (2023). Classification, $\alpha $-Inner Derivations and $\alpha $-Centroids of Finite-Dimensional Complex Hom-Trialgebras. arXiv preprint arXiv:2305.00471.
\bibitem{e}Mosbahi, B., Asif, S., \& Zahari, A. (2023). Classification of tridendriform algebra and related structures. arXiv preprint arXiv:2305.08513.
\bibitem{f}Fiidow, M. A., Zahari, A., \& Mosbahi, B. (2023). Quasi-Centroids and Quasi-Derivations of Low Dimensional Associative Algebras. arXiv preprint arXiv:2306.14331.
\bibitem{g}Asif, S., Wang, Y., Mosbahi, B., \& Basdouri, I. (2023). Cohomology and deformation theory of $\mathcal {O} $-operators on Hom-Lie conformal algebras. arXiv preprint arXiv:2312.04121.
\bibitem{h}Mansuroglu, N., \& Mosbahi, B. (2024). On structures of BiHom-Superdialgebras and their derivations. arXiv preprint arXiv:2404.12098.
\bibitem{i}Mansuroglu, N., \& Mosbahi, B. (2024). Generalized derivations of BiHom-supertrialgebras. arXiv preprint arXiv:2404.12112.
\bibitem{j}Mainellis, E., Mosbahi, B., \& Zahari, A. (2024). Cohomology of BiHom-Associative Trialgebras. arXiv preprint arXiv:2404.15567.
\bibitem{k}Mainellis, E., Mosbahi, B., \& Zahari, A. (2024). Compatible Associative Algebras and Some Invariants. arXiv preprint arXiv:2405.18243.
\bibitem{l}Imed, B., \& Mosbahi, B. (2024). Classification of ($\rho,\tau,\sigma $)-derivations of two-dimensional left-symmetric dialgebras. arXiv preprint arXiv:2411.05716.
\bibitem{m}Imed, B., Lerbet, J., \& Mosbahi, B. (2024). Quasi-Centroids and Quasi-Derivations of low-dimensional Zinbiel algebras. arXiv preprint arXiv:2411.09532.
\bibitem{n}Mosbahi, M., Elgasri, S., Lajnef, M., Mosbahi, B., \& Driss, Z. (2021). Performance enhancement of a twisted Savonius hydrokinetic turbine with an upstream deflector. International Journal of Green Energy, 18(1), 51-65.
\bibitem{o}Mosbahi, M., Lajnef, M., Derbel, M., Mosbahi, B., Aricò, C., Sinagra, M., \& Driss, Z. (2021). Performance improvement of a drag hydrokinetic turbine. Water, 13(3), 273.
\bibitem{p}Mosbahi, M., Derbel, M., Lajnef, M., Mosbahi, B., Driss, Z., Aricò, C., \& Tucciarelli, T. (2021). Performance study of twisted Darrieus hydrokinetic turbine with novel blade design. Journal of Energy Resources Technology, 143(9), 091302.
\bibitem{q}Mosbahi, M., Lajnef, M., Derbel, M., Mosbahi, B., Driss, Z., Aricò, C., \& Tucciarelli, T. (2021). Performance improvement of a Savonius water rotor with novel blade shapes. Ocean Engineering, 237, 109611.
\bibitem{r}Mosbahi, M., Derbel, M., Hannachi, M., Mosbahi, B., Driss, Z., Aricò, C., \& Tucciarelli, T. (2023). Performance study of spiral Darrieus water rotor with V-shaped blades. Proceedings of the Institution of Mechanical Engineers, Part C: Journal of Mechanical Engineering Science, 237(21), 4979-4990.
\bibitem{s}Mosbahi, B., Zahari, A., Basdouri, I. (2023). Classification, $\alpha$-Inner Derivations and $\alpha$-Centroids of Finite-Dimensional Complex Hom-Trialgebras. Pure and Applied Mathematics Journal, 12(5), 86-97. https://doi.org/10.11648/j.pamj.20231205.12
\bibitem{t}ABDOU, A. Z., \& MOSBAHI, B. (2024). CLASSIFICATION OF COMPATIBLE ASSOCIATIVE ALGEBRAS AND SOME INVARIANTS. Available at SSRN 4877916.
\bibitem{u} Makhlouf, A., \& Zahari, A. (2020). Structure and classification of Hom-associative algebras. Acta et Commentationes Universitatis Tartuensis de Mathematica, 24(1), 79-102.
\bibitem{v} Zahari, A.; Bakayoko, I. On BiHom-Associative dialgebras.Open Journal of Mathematical Sciences, Vol. 7, No. 1 (2023).pp. 96-117.
\bibitem{w}Imed, B., Lerbet, J., \& Mosbahi, B. (2024). Central derivations of low-dimensional Zinbiel algebras. arXiv preprint arXiv:2411.15642.
\bibitem{x}Okba, B., \& Mosbahi, B. (2024). Rota-type operators on 2-dimensional dendriform algebras. arXiv preprint arXiv:2411.15358.
\bibitem{y}Mosbahi, B., \& Zahari, A. (2024). An Algorithmic Approach to Inner Derivations of Low-Dimensional Zinbiel Algebras. arXiv preprint arXiv:2412.20599.

 \end{thebibliography}
\end{document}